\newtheorem{theorem}{Theorem}[section]
\newtheorem{definition}[theorem]{Definition}
\newtheorem{proposition}[theorem]{Proposition}
\newtheorem{corollary}[theorem]{Corollary}
\newtheorem{lemma}[theorem]{Lemma}
\newtheorem{fact}[theorem]{Remark}
\newtheorem{exemplu}[theorem]{Example}
\newtheorem{exercise}{Exercise}
\newtheorem{notation}[theorem]{Notation}
\newcommand{\bdfn}{\begin{definition}}
\newcommand{\edfn}{\end{definition}}
\newcommand{\bthm}{\begin{theorem}}
\newcommand{\ethm}{\end{theorem}}
\newcommand{\bprop}{\begin{proposition}}
\newcommand{\eprop}{\end{proposition}}
\newcommand{\bcor}{\begin{corollary}}
\newcommand{\ecor}{\end{corollary}}
\newcommand{\blem}{\begin{lemma}}
\newcommand{\elem}{\end{lemma}}
\newcommand{\bfact}{\begin{fact}}
\newcommand{\efact}{\end{fact}}
\newcommand{\bex}{\begin{exemplu}\begin{rm}}
\newcommand{\eex}{\end{rm}\end{exemplu}}
\newcommand{\bxc}{\begin{exercise}}
\newcommand{\exc}{\end{exercise}}
\newcommand{\bntn}{\begin{notation}}
\newcommand{\entn}{\end{notation}}
\newcommand{\be}{\begin{enumerate}}
\newcommand{\ee}{\end{enumerate}}
\newcommand{\bce}{\begin{center}}
\newcommand{\ece}{\end{center}}
\newcommand{\bi}{\begin{itemize}}
\newcommand{\ei}{\end{itemize}}
\newcommand{\bt}{\begin{tabular}}
\newcommand{\et}{\end{tabular}}
\newcommand{\beq}{\begin{equation}}
\newcommand{\eeq}{\end{equation}}
\newcommand{\ba}{\begin{array}} 
\newcommand{\ea}{\end{array}}
\newcommand {\bea} {\begin{eqnarray}}
\newcommand {\eea} {\end {eqnarray}}
\newcommand {\bua} {\begin{eqnarray*}}
\newcommand {\eua} {\end {eqnarray*}}
\newcommand{\se}{\subseteq}
\newcommand{\ds}{\displaystyle}
\def\R{{\mathbb R}}
\def\N{{\mathbb N}}
\def\Z{{\mathbb Z}}
\def\R{{\mathbb R}}
\newcommand{\cB}{\mathcal{B}}
\newcommand{\eps}{\varepsilon}
\newcommand{\limn}{\ds\lim_{n\to\infty}}
\newcommand{\lsupn}{\ds \limsup_{n\to\infty}}
\newcounter{ct}
\newcommand{\solution}[1]{\begin{proof}#1\end{proof}}
\begin{document}

\title{Effective results on nonlinear ergodic averages in CAT$(\kappa)$ spaces}
\author{Lauren\c{t}iu Leu\c{s}tean${}^{1,2}$,  Adriana Nicolae${}^{3,4}$ \\[0.2cm]
\footnotesize ${}^1$ Faculty of Mathematics and Computer Science, University of Bucharest,\\
\footnotesize Academiei 14,  P.O. Box 010014, Bucharest, Romania\\[0.1cm]
\footnotesize ${}^2$ Simion Stoilow Institute of Mathematics of the Romanian Academy,\\
\footnotesize P. O. Box 1-764, RO-014700 Bucharest, Romania\\[0.1cm]
\footnotesize ${}^3$ Department of Mathematics, Babe\c{s}-Bolyai University, \\
\footnotesize  Kog\u{a}lniceanu 1, 400084 Cluj-Napoca, Romania\\[0.1cm]
\footnotesize ${}^4$ Simion Stoilow Institute of Mathematics of the Romanian Academy,\\
\footnotesize Research group of the project PD-3-0152,\\
\footnotesize P. O. Box 1-764, RO-014700 Bucharest, Romania\\[0.1cm]
\footnotesize E-mails:  Laurentiu.Leustean@imar.ro, anicolae@math.ubbcluj.ro
}

\date{}

\maketitle

\begin{abstract}
In this paper we apply proof mining techniques to compute, in the setting of CAT$(\kappa)$ spaces (with $\kappa >0$), effective and highly uniform rates of asymptotic 
regularity  and metastability for a nonlinear generalization of the ergodic averages, known as the Halpern iteration. 
In this way, we obtain  a uniform quantitative 
version of a nonlinear extension of  the classical von Neumann mean ergodic theorem. 
\\

\noindent {\em MSC:} 47H25,  03F10, 47J25, 47H09.\\

\noindent {\em Keywords:} Proof mining, nonlinear ergodic averages,  CAT$(\kappa)$ spaces, rates of metastabilty, Halpern iteration, asymptotic regularity.
\end{abstract}

\section{Introduction}
In this paper we apply methods from mathematical logic to obtain a uniform quantitative 
version of a generalization of 
the classical von Neumann mean ergodic theorem, giving effective rates of metastability for 
the so-called Halpern iteration, a nonlinear 
generalization of the ergodic averages. Our results are a contribution to the line of research 
known as {\em proof mining}, initiated in the 50's 
by Kreisel under the name of {\em unwinding of proofs} and extensively developed by Kohlenbach, 
beginning with the 90's. 
The idea of this research direction is to extract new, effective information from mathematical 
proofs making use of ineffective principles.  Hence, 
it can be related to Terence Tao's proposal \cite{Tao07} of {\em hard analysis}, based on 
finitary arguments, instead of the infinitary ones from {\em soft analysis}.  
Proof mining has already been applied in approximation theory, nonlinear analysis,  ergodic theory, 
topological dynamics and Ramsey theory.  Related to these  applications,  general logical metatheorems were proved,  
having the following form: if  certain statements  satisfying general logical conditions
(e.g. $\forall\exists$-sentences) are proved in some formal system associated to an abstract space, 
then uniform finitary versions of these statements are guaranteed to hold and,  furthermore, one can transform the initial proof  into a 
quantitative one for the finitary version and, in this way, extract effective uniform bounds. We refer to Kohlenbach's book \cite{Koh08-book} for an introduction to proof mining.

Our theorems guarantee under general logical conditions such strong uniform versions of non-uniform existence statements. Moreover, they provide algorithms for actually extracting effective uniform bounds and transforming the original proof into one for the stronger uniformity result. 

Let us recall the Hilbert space formulation of the celebrated von Neumann mean ergodic theorem.
\bthm
Let $H$ be a Hilbert space and $U:H\to H$ be a unitary operator. Then for all $x\in H$, the 
Ces\`{a}ro mean $ x_n=\frac1n\sum_{i=0}^{n-1}U^ix$ converges
strongly to the projection of $x$ onto the set of fixed points of $U$.
\ethm
\noindent If ${\cal X}=(X,\cB,\mu,T)$ is a probability measure-preserving system, $H=L^2({\cal X})$ 
and $U=U_T:L^2({\cal X})\to L^2({\cal X}), \, f\mapsto f\circ T$ is 
the induced operator, the Ces\`{a}ro mean  starting with $f\in L^2({\cal X})$ becomes the   
ergodic average $ A_nf=\frac1n\sum_{i=0}^{n-1}f\circ T^i$.\

The convergence of the ergodic averages can be arbitrarily slow, as shown by Krengel 
\cite{Kre78}. Furthermore, one cannot expect, in general, to get 
effective rates of convergence for the ergodic averages. Avigad, Gerhardy and Towsner 
\cite{AviGerTow10} applied methods of computable analysis on Hilbert spaces 
to obtain an example of a 
computable Lebesgue measure-preserving transformation $T$ on $[0,1]$ and a computable characteristic 
function $\chi_A$ such that the limit of the sequence $A_n\chi_A$ 
is not a computable element of $L^2([0,1])$, which implies that there is no computable bound 
on the rate of convergence of $(A_n\chi_A)$.

However, one can consider the following equivalent reformulation of the Cauchy property of $(x_n)$:
\beq
\forall k\in\N\,\forall g : \N\to\N\, \exists N\forall i,j\in[N, N + g(N)] \,\,
\left(\|x_i-x_j\|< 2^{-k}\right). \label{def-meta-xn}
\eeq
This is known in logic as Kreisel's \cite{Kre51,Kre52} no-counterexample interpretation of the Cauchy property
and it was popularized in the last years under the name of {\em metastability} by Tao \cite{Tao07,Tao08}.
In \cite{Tao08}, Tao generalized the mean ergodic theorem for multiple commuting measure-preserving 
transformations, by deducing it from a  finitary norm convergence result, 
expressed in terms of metastability. Recently, Walsh \cite{Wal12} used again metastability 
to show the  $L^2$-convergence of multiple polynomial ergodic averages arising from 
nilpotent groups of measure-preserving transformations. 

Logical metatheorems developed by Kohlenbach \cite{Koh05} show that, from wide classes of 
mathematical proofs one can extract  effective bounds on $\exists N$ in \eqref{def-meta-xn}. 
Thus, taking $\eps>0$ instead of $2^{-k}$, 
we define a {\em rate of metastability} as a functional $\Phi:(0,\infty)\times \N^{\mathbb N}\to{\mathbb N}$ satisfying
\beq
\forall \eps>0\,\forall g : 
\N\to\N\, \exists N\leq \Phi(\eps,g)\,\forall i,j\in[N, N + g(N)] \,\,\left(\|x_i-x_j\|< 
\eps\right).
\eeq

Thus, a natural direction of research is to obtain finitary, quantitative versions of convergence statements for sequences $(x_n)$ by providing effective rates of metastability. A qualitative feature of these quantitative versions is that the rates of metastability are highly uniform and independent or have only a weak dependence on the input data. Furthermore, these quantitative versions can be thereafter generalized to new structures, obtaining as an immediate consequence the generalization of the initial (non-quantitative) Cauchy statement to these structures. The main quantitative result of this paper is obtained in this way.  We refer  to \cite{KohLeu10} for another example in the context of the asymptotic behaviour of nonlinear iterations. 

Avigad, Gerhardy and Towsner \cite{AviGerTow10} computed for the first time explicit and 
uniform rates of metastability for the ergodic averages, by a logical analysis  
of Riesz' proof of the mean ergodic theorem. Their result was generalized, with better bounds, 
to uniformly convex Banach spaces by Kohlenbach and the first author \cite{KohLeu09}, 
applying proof mining methods, but this time to a proof of Garrett Birkhoff \cite{Bir39}. 
In fact, Avigad and Rute \cite{AviRut13} realized that the computations in \cite{KohLeu12}
allow one to obtain an effective bound on the number of  
$\eps$-fluctuations (i.e. pairs $(i,j)$ with $i>j$ and $\|x_i-x_j\|>\eps$). 
A very nice discussion on the different types of quantitative information (metastability, 
effective learnability, bounds on the number of oscillations) that can be extracted 
from convergence proofs is done in a recent paper by  Kohlenbach and  Safarik  \cite{KohSaf13}.

In the important paper \cite{Wit92}, Wittmann obtained the following nonlinear generalization of the mean ergodic theorem.

\begin{theorem}\label{wittmann-thm}\cite{Wit92}
Let $C$  be a bounded closed convex subset of a Hilbert space $X$, $T:C\to C$ a nonexpansive mapping and  
$(\lambda_n)_{n\ge 1}$ a sequence in $[0,1]$. For any $u\in C$, define 
\beq 
x_0=u, \quad x_{n+1}=\lambda_{n+1}u+(1-\lambda_{n+1})Tx_n. \label{intro-def-Halpern-iterate}
\eeq
Assume that $(\lambda_n)$ satisfies  
\beq
\ds \lim_{n\to\infty} \lambda_n =0, \quad  \sum_{n=1}^\infty|\lambda_{n+1}-\lambda_n| <\infty \quad \text {and} \quad 
\sum_{n=1}^\infty \lambda_n=\infty \label{hyp-lambdan}
\eeq
Then for any $u\in C$, $(x_n)$ converges to the projection $P_{Fix(T)}u$ of 
$u$ onto the (nonempty) set of fixed points $Fix(T)$.
\end{theorem}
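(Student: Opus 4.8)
The plan is to follow the standard route for proving convergence of the Halpern iteration, resting on three ingredients: the structure of the fixed point set, asymptotic regularity of the iteration, and a projection-plus-weak-compactness argument, glued together by a recursion lemma of Xu type. First I would record the relevant facts about $Fix(T)$: since $C$ is bounded, closed and convex in the Hilbert space $X$, it is weakly compact, so $T$ has a fixed point by the Browder--G\"ohde--Kirk theorem; $Fix(T)$ is closed by continuity of $T$ and convex by strict convexity of $X$, so $p := P_{Fix(T)}u$ is well defined and characterized by $p \in Fix(T)$ and $\langle u - p,\, y - p \rangle \le 0$ for all $y \in Fix(T)$. Note also that $(x_n) \subseteq C$ is automatically bounded; set $D = \mathrm{diam}(C)$.

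Next I would prove asymptotic regularity, $\|x_n - Tx_n\| \to 0$. Subtracting two consecutive iterates and using nonexpansiveness yields
\[
\|x_{n+1} - x_n\| \le (1 - \lambda_{n+1})\|x_n - x_{n-1}\| + |\lambda_{n+1} - \lambda_n|\, D ,
\]
and since $\sum_n |\lambda_{n+1} - \lambda_n| < \infty$ while $\sum_n \lambda_n = \infty$ (so $\prod_{k \le n}(1 - \lambda_k) \to 0$), an elementary telescoping estimate gives $\|x_{n+1} - x_n\| \to 0$. Combined with $\|x_{n+1} - Tx_n\| = \lambda_{n+1}\|u - Tx_n\| \le \lambda_{n+1} D \to 0$, this yields $\|x_n - Tx_n\| \to 0$.

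Finally I would prove $x_n \to p$ in two moves. First, $\limsup_n \langle u - p, x_n - p\rangle \le 0$: choose a subsequence realizing the $\limsup$, pass by weak compactness of $C$ to a further subsequence $x_{n_k} \rightharpoonup z \in C$, deduce $z \in Fix(T)$ from $\|x_{n_k} - Tx_{n_k}\| \to 0$ via Browder's demiclosedness principle for $I - T$ at $0$, and conclude via the projection inequality that $\langle u - p, z - p \rangle \le 0$. Second, expanding $\|x_{n+1} - p\|^2 = \|\lambda_{n+1}(u - p) + (1 - \lambda_{n+1})(Tx_n - p)\|^2$ and using $\|Tx_n - p\| \le \|x_n - p\|$ together with $(1 - \lambda_{n+1})^2 \le 1 - \lambda_{n+1}$ gives
\[
\|x_{n+1} - p\|^2 \le (1 - \lambda_{n+1})\|x_n - p\|^2 + \lambda_{n+1}\,\beta_n , \qquad \beta_n := \lambda_{n+1}\|u - p\|^2 + 2(1 - \lambda_{n+1})\langle u - p, Tx_n - p\rangle ,
\]
where $\limsup_n \beta_n \le 0$ by $\lambda_n \to 0$, asymptotic regularity and the first move. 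Since $\sum_n \lambda_{n+1} = \infty$, the standard lemma on such recursions (fixing $\varepsilon > 0$: eventually $\beta_n \le \varepsilon$, hence $\|x_n - p\|^2 - \varepsilon \le \prod_{k}(1 - \lambda_k)\,(\|x_N - p\|^2 - \varepsilon) \to 0$) forces $\|x_n - p\|^2 \to 0$, i.e. $x_n \to P_{Fix(T)}u$.

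The main obstacle is the last step: the $\limsup$ inequality is precisely where the Hilbert-space geometry is essential --- weak sequential compactness of $C$ and demiclosedness of $I - T$ (through Opial's inequality) --- and it is exactly the ingredient that resists a direct translation; in the CAT$(\kappa)$ setting one must replace weak limits and demiclosedness by asymptotic centers / $\Delta$-convergence together with a suitable quantitative surrogate for the projection inequality, whereas the asymptotic-regularity step is a robust telescoping estimate once the three conditions on $(\lambda_n)$ are in hand.
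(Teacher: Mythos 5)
Your proof is correct, but it handles the crucial $\limsup$ step differently from the proofs this paper's quantitative analysis is built on. You establish $\limsup_n \langle u - p, x_n - p\rangle \le 0$ for $p = P_{Fix(T)}u$ by extracting a weakly convergent subsequence and invoking Browder's demiclosedness principle for $I-T$, which is the standard Xu-style Hilbert-space argument. The proofs the paper mines --- Pi\c{a}tek's, following Saejung, Shioji--Takahashi and ultimately Wittmann --- instead route through the approximating curve $z_t^u = tu + (1-t)Tz_t^u$: one first shows $z_t^u$ converges as $t \to 0^+$ to a fixed point $p$ by an elementary, compactness-free Halpern-type argument (Browder's theorem, cf.\ Section 6), then drives the recursion via the quantity $\gamma_n^t$ of \eqref{def-gamma-nt}, whose Hilbert specialization is $\|u - z_t^u\|^2 - \|u - x_{n+1}\|^2$, showing $\limsup_n \gamma_n^t \le 0$ directly from asymptotic regularity and a convexity inequality (Proposition \ref{prop-gammant-limsup-0}). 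Both routes share the asymptotic-regularity step, which you prove correctly, and conclude with the same Xu-type recursion lemma (Lemma \ref{quant-Aoyama-all-meta-rate-0} in quantitative form). The advantage of the resolvent route --- and the reason the paper adopts it --- is that the $\limsup$ step becomes a purely metric/convexity estimate with no appeal to the weak topology or sequential compactness, so it transfers to CAT$(\kappa)$ spaces and is directly amenable to proof mining, yielding the explicit rate in Theorem \ref{main-theorem-metastability}. Your closing paragraph rightly singles out weak compactness and demiclosedness as the non-transferable step; note, though, that the paper's replacement is not $\Delta$-convergence and asymptotic centers but the curve $z_t^u$ combined with the trigonometric inequalities of Section \ref{section-CATk-technical-lemmas}.
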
 
One can easily see that $(x_n)$ coincides with the Ces\`{a}ro mean when $T$ is linear and 
$\ds \lambda_n=\frac{1}{n+1}$. 
The iteration $(x_n)$ is known as the Halpern iteration, as it was introduced by Halpern \cite{Hal67} for the  
special case $u=0$. We refer to \cite[Section 3]{KohLeu12a} for a discussion on results 
in the literature on Halpern iterations, obtained by considering different conditions on 
$(\lambda_n)$ or more general spaces. 

Kohlenbach's logical metatheorem for Hilbert spaces \cite{Koh05} guarantees also in the case 
of Wittmann's theorem that from its proof one can extract 
a rate of metastability $\Phi$ of $(x_n)$,  uniform in the following sense: it depends only 
on $\eps$ and $g$, an upper bound on the diameter of $C$ and moduli on $(\lambda_n)$, given 
by the quantitative version of \eqref{hyp-lambdan}.
Thus, $\Phi$ is independent with respect to the starting point $u$ of the iteration, 
the nonexpansive mapping $T$, the Hilbert space $X$ and depends on $C$ only via its diameter.
Kohlenbach \cite{Koh11} computed such a uniform rate of metastability by a logical analysis of Wittmann's proof.

Furthermore, Kohlenbach and the first author  \cite{KohLeu12,KohLeu12a,KohLeu13} extracted rates 
of metastability  from the proofs of two generalizations of Wittmann's theorem given by Shioji 
and Takahashi \cite{ShiTak97} for a class of Banach spaces with a uniformly G\^ateaux differentiable 
norm  and by Saejung \cite{Sae10} for CAT$(0)$ spaces. 
Both Saejung's and Shioji-Takahashi's proofs use Banach limits (whose existence requires the 
axiom of choice), inspired by Lorentz' seminal paper \cite{Lor48}, introducing almost convergence. 
Our quantitative results were obtained 
by developing  in \cite{KohLeu12a} a method to eliminate the use of Banach limits from these 
proofs and get, in this way, 
elementary proofs to which general logical metatheorems for CAT$(0)$ spaces \cite{Koh05} and for uniformly 
smooth Banach spaces \cite{KohLeu12} can be applied to guarantee the extractability  of 
effective bounds. We point out that the use of Lorentz' almost convergence (and hence, 
Banach limits) in nonlinear ergodic theory was introduced by Reich \cite{Rei78}, while 
Bruck and Reich \cite{BruRei81} applied Banach limits for the first time to the study of 
Halpern iterations (see also \cite[Sections 12, 14]{GoeRei84}).

Geodesic spaces provide a suitable setting for extending the notion of sectional curvature 
from Riemannian manifolds. 
An important class of geodesic spaces of bounded curvature are CAT$(\kappa)$ spaces, 
where geodesic triangles are in some sense ``thin''. 
Such spaces enjoy nice properties inherited from the comparison with the model spaces and 
proved to be relevant in various problems and aspects in geometry (see \cite{BriHae99}). 

Recently, Pi\c{a}tek \cite{Pia11} extended Wittmann's result to the context of CAT$(\kappa)$ spaces with $\kappa>0$. {\em In this paper we extract an effective and uniform rate of metastability for this generalization of Wittmann's theorem.
}

Our main quantitative result (Theorem \ref{main-theorem-metastability})  is obtained by  generalizing to CAT$(\kappa)$ spaces  the quantitative proof for  CAT$(0)$ spaces  from \cite{KohLeu12a}.  
Thus, we apply  again the general method developed in \cite{KohLeu12a}, 
together with the remark  that, in fact, our logical analysis of Saejung's 
proof for CAT$(0)$ spaces results in the elimination of any contribution of Banach limits, 
hence even the finitary lemmas proved in \cite[Section 8]{KohLeu12a} are no longer needed (see \cite{KohLeu13}).
Despite this simplification, the proofs we give in this paper are much more involved, 
since we work in the setting of CAT$(\kappa)$ spaces. However, we still get a rate of metastability having a form 
similar to the one described  in \cite{KohSaf13}.  

As the first step in the convergence proof is to obtain the asymptotic regularity, our first important result 
(Proposition \ref{effective-as-reg-divergent-sum}) consists in the computation of a uniform rate of asymptotic regularity.
\mbox{}

{\em For the rest of the paper $\N=\{0,1,2,\ldots\}$ and $\Z_+=\{1,2,\ldots\}$.  Furthermore, we 
consider  CAT$(\kappa)$ spaces with $\kappa>0$. }

\section{CAT$(\kappa)$ spaces}

Let $(X,d)$ be a metric space.  A {\em geodesic path} from $x$ to $y$ is a mapping $c~:~[0,l] ~\subseteq ~\mathbb{R}~ \to ~X$ such that $c(0) = x, c(l) = y$ and 
$d\left(c(t),c(t^{\prime})\right) = \left|t - t^{\prime}\right|$ for every $t,t^{\prime} \in [0,l]$. The image $c\left([0,l]\right)$ of $c$ forms a {\em geodesic segment} 
which joins $x$ and $y$. Note that a geodesic segment from $x$ to $y$ is not necessarily unique. If no confusion arises, we use  $[x,y]$ to denote a geodesic segment 
joining $x$ and $y$.  $(X,d)$ is called a {\em (uniquely) geodesic space} if every two points $x,y \in X$ can be joined by a (unique) geodesic path. A point $z\in X$ 
belongs to the geodesic segment $[x,y]$ if and only if there exists $t\in [0,1]$ such that $d(z,x)= td(x,y)$ and $d(z,y)=(1-t)d(x,y)$, and we write $z=(1-t)x+ty$ 
for simplicity. This, too, may not be unique. A subset $C$ of $X$ is {\em convex} if $C$ contains any geodesic segment that joins every two points in $C$.  
A {\em geodesic triangle} $\Delta(x_1,x_2,x_3)$ consists of three points $x_1, x_2$ and $x_3$ in $X$ (its {\em vertices}) and three geodesic segments corresponding 
to each pair of points (its {\em edges}). 

CAT$(\kappa)$ spaces are defined in terms of comparisons with the model spaces $M^n_\kappa$. We focus here on CAT$(\kappa)$ spaces with $\kappa > 0$. We give below the precise definition and briefly describe some of their properties that play an essential role in this work. For a detailed discussion on geodesic metric spaces and, in particular, on CAT$(\kappa)$ spaces, one may check, for example, \cite{BriHae99}.

The {\em $n$-dimensional sphere $\mathbb{S}^n$} is the set $\{x \in \mathbb{R}^{n+1} : (x \ | \ x) = 1\}$, where $(\cdot \ | \ \cdot)$ stands for the Euclidean scalar product. 
Consider the mapping $d : \mathbb{S}^n \times \mathbb{S}^n \to \mathbb{R}$ by assigning to each $(x,y) \in \mathbb{S}^n \times \mathbb{S}^n$ the unique number 
$d(x,y) \in [0, \pi]$ such that $\cos d(x, y) = (x \ | \ y)$. Then, $(\mathbb{S}^n,d)$ is a metric space called the spherical space. This space is also geodesic and, 
if $d(x, y) < \pi$, then there exists a unique geodesic segment joining $x$ and $y$. Moreover, open (resp. closed) balls of radius $\le \pi/2$ (resp. $<\pi/2$) are convex. The {\it spherical law of cosines} 
states that in a spherical
triangle with vertices $x, y, z \in \mathbb{S}^n$ and $\gamma$ the spherical angle between the geodesic segments $[x, y]$ and $[x, z]$ we have
\[\cos d(y, z) = \cos d(x, y) \cos d(x, z) + \sin d(x, y) \sin d(x, z) \cos \gamma.\]

Let $\kappa > 0$ and $n \in \mathbb{N}$. The classical {\em model spaces $M^n_\kappa$} are obtained from the spherical space $\mathbb{S}^n$ by multiplying 
the spherical distance with $1/\sqrt{\kappa}$. These spaces inherit the geometrical properties from the spherical space. Thus, there is a unique geodesic path
joining $x,y \in M^n_\kappa$ if and only if $d(x,y) < \pi/\sqrt{\kappa}$. Furthermore, closed balls of radius $ < \pi/(2\sqrt{\kappa})$ are convex and we 
have a counterpart of the spherical law of cosines. We denote the {\em diameter of $M^n_\kappa$} by $D_\kappa = \pi/\sqrt{\kappa}$.

For a geodesic triangle $\Delta$=$\Delta(x_1,x_2,x_3)$, a {\em $\kappa$-comparison triangle} is a triangle $\bar{\Delta} = \Delta(\bar{x}_1, \bar{x}_2, \bar{x}_3)$ 
in $M^2_\kappa$ such that $d(x_i,x_j) = d_{M^2_\kappa}(\bar{x}_i,\bar{x}_j)$ for $i,j \in \{1,2,3\}$. For $\kappa$ fixed, $\kappa$-comparison triangles of geodesic 
triangles (having perimeter less than $2D_\kappa$) always exist and are unique up to isometry.

A geodesic triangle $\Delta$ of perimeter less than $2D_\kappa$ satisfies the {\em CAT$(\kappa)$ inequality} if for every $\kappa$-comparison triangle $\bar{\Delta}$ of $\Delta$ and for every 
$x,y \in \Delta$ we have
\[d(x,y) \le d_{M^2_\kappa}(\bar{x},\bar{y}),\]
where  $\bar{x},\bar{y} \in \bar{\Delta}$ are the comparison points of $x$ and $y$, i.e., if $x = (1-t)x_i + tx_j$ then $\bar{x} = (1-t)\bar{x}_i + t\bar{x}_j$ for $i,j \in \{1,2,3\}$. 

A metric space is called a {\em CAT$(\kappa)$ space} if every two points at distance less than $D_\kappa$ can be joined by a geodesic segment and every geodesic triangle 
having perimeter less than $2D_\kappa$ satisfies the CAT$(\kappa)$ inequality. CAT$(0)$ spaces are defined in a similar way considering the model space $M^2_0$ to be 
the Euclidean plane of infinite diameter.

\section{Main results}

Let $(X,d)$ be a geodesic space, $C\se X$ a convex subset, $T:C\to C$ a nonexpansive mapping and $(\lambda_n)$ a sequence in $[0,1]$. The Halpern iteration starting at $u\in C$ can be defined by
\beq 
x_0=u, \quad x_{n+1}=\lambda_{n+1}u + (1-\lambda_{n+1})Tx_n. \label{def-Halpern-CATk}
\eeq

The main purpose of our work is to prove a quantitative version of the following generalization of 
Wittmann's theorem to CAT$(\kappa)$ spaces, obtained recently by Pi\c{a}tek \cite{Pia11}.

\bthm\label{thm-Piatek}
Let $X$ be a complete CAT$(\kappa)$ space, $C \subseteq X$ a bounded closed convex subset with diameter 
$\ds d_C < \frac{D_\kappa}{2}$ and $T:C\to C$  a nonexpansive mapping. Assume that $(\lambda_n)$ 
satisfies \eqref{hyp-lambdan}. Then for any $u\in C$, the iteration $(x_n)$ starting from $u$ converges to the 
fixed point of $T$ which is nearest to $u$. 
\ethm

A first important result of this paper is the extraction of an effective rate of  asymptotic regularity 
for the Halpern iteration, that is, a rate of the convergence of $(d(x_n,Tx_n))$ towards $0$.  In order to state this result, 
we need to make the hypotheses \eqref{hyp-lambdan} on $(\lambda_n)$ quantitative. 

For brevity, we say that the sequence $(\lambda_n)$ and the functions $\alpha:(0,\infty)\to\Z_+$, $\gamma:(0,\infty)\to\Z_+$ and 
$\theta:\Z_+\to \Z_+$ satisfy (*) if the following conditions hold:
\be
\item $\limn \lambda_{n+1}=0$ with rate of convergence $\alpha$, i.e., 
\[\lambda_{n+1} \le \eps, \quad \text{for all } \eps>0 \text{ and all } n\geq \alpha(\eps);\] 
\item $\ds \sum_{n=1}^\infty |\lambda_{n+1}-\lambda_n|$  converges with  Cauchy modulus $\gamma$, 
i.e., 
\[\ds \sum_{i=\gamma(\eps)+1}^{\gamma(\eps)+n} |\lambda_{i+1}-\lambda_i| \leq \eps, \quad \text{for all }  
\eps>0 \text{ and all }n\in\Z_+;\]
\item $\ds\sum_{n=1}^\infty \lambda_{n+1}=\infty$ with rate of divergence $\theta$, i.e., 
\[\ds \sum_{k=1}^{\theta(n)}\lambda_{k+1} \geq n,\quad \text{for all }n\in\Z_+.\]
\ee

\bprop\label{effective-as-reg-divergent-sum}
Let $X$ be a CAT$(\kappa)$ space, $C \subseteq X$ a  bounded convex subset, 
$T:C\to C$ nonexpansive and  $M < \frac{D_\kappa}{2}$  an upper bound on the finite diameter  
$d_C$ of $C$.  Assume furthermore that $(\lambda_n),\alpha,\gamma,\theta$ satisfy (*). 

Then  $\limn d(x_n,x_{n+1})=0$ with rate of convergence $\tilde{\Phi}$ given by
\beq
\tilde{\Phi}(\eps,\kappa,M,\gamma,\theta)=\theta\left(\left\lceil\frac1{\cos(M \sqrt{\kappa})}\right\rceil\left(\gamma\left(\frac\eps{2M}\right)+
\max\left\{\left\lceil\ln\left(\frac{2M}\eps\right)\right\rceil,1\right\}\right)\right) \label{rate-xn-xn+1-sum-divergent}
\eeq
and $\limn d(x_n,Tx_n)=0$  with rate of convergence $\Phi$ given by 
\beq
\Phi(\eps,\kappa,M,\gamma,\theta,\alpha) = \max\left\{\tilde{\Phi}\left(\frac{\eps}2,\kappa,M,\gamma,\theta\right), 
\alpha\left(\frac\eps {2M}\right)\right\}.\label{rate-as-reg-sum-divergent}
\eeq
\eprop
\begin{proof}
See Section \ref{section-ef-as-reg}.
\end{proof}

If $\ds \lambda_n=\frac1{n+1}$ one can easily obtain rates $\alpha,\gamma,\theta$:
\beq
\alpha(\eps)=\gamma(\eps)=\left\lceil\frac{1}\eps\right\rceil, \qquad \theta(n)=\exp\left((n+1)\ln4\right).
\label{1n+1-alpha-beta-theta}
\eeq
As an immediate consequence we get the following:

\bcor
Assume that  $\ds \lambda_n=\frac1{n+1}, \, n\geq 1$. Then 
\[\limn d(x_n,x_{n+1})=\limn d(x_n,Tx_n)=0\]
with a common rate of convergence 
\beq
\Psi(\eps,\kappa,M) = \exp\left(\left\lceil\frac1{\cos(M \sqrt{\kappa})}\right\rceil\left\lceil \frac{8M}\eps+2\right\rceil \ln 4\right),
\eeq
which is exponential in $\ds \frac1\eps$.
\ecor

We point out that exponential rates of asymptotic regularity for the Halpern iteration were 
obtained by the first author for Banach spaces in \cite{Leu07a} and for the so-called 
$W$-hyperbolic spaces in \cite{Leu09-Habil}. Kohlenbach \cite{Koh11}  remarked that the proof 
in \cite{Leu07a} can be simplified and, as a consequence, one gets quadratic 
rates in Banach spaces. 
For CAT(0) spaces, Kohlenbach and the first author provide in \cite{KohLeu12a}  a quantitative 
asymptotic regularity result for general $(\lambda_n)$ by considering instead of 
$\ds \sum_{n=1}^\infty \lambda_{n+1}=\infty$ the equivalent condition 
$\ds  \prod_{n=1}^\infty (1-\lambda_{n+1})=0$. As a corollary, one obtains again quadratic 
rates of asymptotic regularity. However, the method used in  \cite{KohLeu12a}  for CAT(0) 
spaces does not hold for CAT$(\kappa)$ spaces.

\mbox{}

The main result of the paper is the following quantitative version of Theorem \ref{thm-Piatek}, 
which provides an explicit uniform rate of metastability for the Halpern iteration 
in CAT$(\kappa)$ spaces. 
To get such a result we  apply again the general method developed by Kohlenbach and the first author in 
\cite{KohLeu12a} for the Halpern iteration in CAT(0) spaces and applied again in 
\cite{KohLeu12} for uniformly smooth Banach spaces as well as in \cite{SchKoh12} for a modified
Halpern iteration in CAT(0) spaces. As noticed in \cite{KohLeu13}, in the end we do not need the
finitary Lemmas 8.3 and 8.4 from \cite{KohLeu12a}, since, as a consequence of the proof mining 
methods applied to Saejung's proofs, one gets a proof where no contributions of Banach limits can be traced.

\begin{theorem}\label{main-theorem-metastability}
Let $X$ be a complete CAT$(\kappa)$ space, $C \subseteq X$ a  bounded closed convex subset, 
$T:C\to C$  nonexpansive and  $M < \frac{D_\kappa}{2}$ an upper bound on the finite diameter  
$d_C$ of $C$.  Assume furthermore that $(\lambda_n),\alpha,\gamma,\theta$ satisfy (*).
Then for all $\varepsilon \in(0,2)$ and  $g:\mathbb{N} \to \mathbb{N}$,
\[
\exists N \le \Sigma(\varepsilon,g,\kappa,M,\theta,\alpha,\gamma) \ \forall m,n\in[N,N+g(N)] \ (d(x_n,x_m)\le \varepsilon),
\]
with $\ds N =\Theta_{K_0}\left(\sin^2\frac{\varepsilon\sqrt{\kappa}}{4}\right)$  for some 
$\ds \left\lceil\frac{1}{\varepsilon_0}\right\rceil\le K_0 \le \widetilde{f^*}^{B_{\eps,\kappa,M}}(0)+
\left\lceil\frac{1}{\varepsilon_0}\right\rceil$ and 
 
\[\Sigma(\varepsilon,g,\kappa,M,\theta,\alpha,\gamma)=A_{\eps,\kappa,M,\theta,\alpha,\gamma} \left(\widetilde{f^*}^{B_{\eps,\kappa,M}}(0)+
\left\lceil\frac{1}{\varepsilon_0}\right\rceil\right),\]
where the above constants and functionals are specified in Table \ref{tabel-1}.
\end{theorem}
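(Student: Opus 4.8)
The plan is to transfer the quantitative proof given for CAT$(0)$ spaces in \cite{KohLeu12a} to the present CAT$(\kappa)$ setting, so the argument splits into a \emph{geometric} part and a \emph{metastability bookkeeping} part. On the geometric side I would first record the quantitative consequences of the CAT$(\kappa)$ inequality that replace the quasilinearization and Cauchy--Schwarz estimates available in CAT$(0)$: working inside $C$, whose diameter is bounded by $M < D_\kappa/2$, the spherical law of cosines applied in the $\kappa$-comparison triangles yields an inequality of the form
\[
\cos\bigl(\sqrt{\kappa}\, d((1-\lambda)p+\lambda q,\, r)\bigr) \ge (1-\lambda)\cos\bigl(\sqrt{\kappa}\, d(p,r)\bigr) + \lambda\cos\bigl(\sqrt{\kappa}\, d(q,r)\bigr) - (\text{correction}),
\]
where the correction term is controlled by $1/\cos(M\sqrt{\kappa})$ --- exactly the factor already appearing in Proposition~\ref{effective-as-reg-divergent-sum}. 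Reformulating everything in terms of the surrogate squared distance $s(a,b):=\sin^2\!\bigl(\tfrac{\sqrt{\kappa}}{2}\,d(a,b)\bigr)$, which plays the role of $\|\cdot\|^2$, one obtains the basic recursion: for an approximate fixed point $p$ of $T$,
\[
s(x_{n+1},p)\le (1-\lambda_{n+1})\,s(x_n,p) + \lambda_{n+1}\, r_n(p) + e_n,
\]
where $r_n(p)$ is a ``quasi-inner-product'' term that is small whenever $p$ is close to being the point of $\mathit{Fix}(T)$ nearest $u$ and $x_n$ points in the right direction, and $e_n$ is an error term made summable by the Cauchy modulus $\gamma$ together with the asymptotic-regularity rate $\Phi$.

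Next I would carry out the elimination of Banach limits as in \cite{KohLeu13}: instead of the resolvent curve $z_t = tu + (1-t)Tz_t$ and a Banach limit to produce the limit point $P_{\mathit{Fix}(T)}u$, one argues with a finite family of approximate fixed points and exploits that the functional $p\mapsto\liminf_n r_n(p)$ can drop only boundedly often. Concretely, one builds a sequence of candidate points; whenever the metastability assertion on the current interval $[N,N+g(N)]$ fails for the current candidate, that failure is converted --- via the recursion above and Proposition~\ref{effective-as-reg-divergent-sum} --- into a new candidate whose associated potential has decreased by at least a fixed amount $\varepsilon_0$ depending only on $\varepsilon,\kappa,M$. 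Since $s(\cdot,\cdot)\in[0,1]$ is bounded, this can happen at most $B_{\varepsilon,\kappa,M}$ times, which is why the number of rounds $K_0$ satisfies $\lceil 1/\varepsilon_0\rceil\le K_0\le \widetilde{f^*}^{\,B_{\varepsilon,\kappa,M}}(0)+\lceil 1/\varepsilon_0\rceil$, with $\widetilde{f^*}$ encoding a single round of this learning procedure (it composes the asymptotic-regularity rate with the shift induced by $g$ and $\theta$). One then finds a common approximate center $p$ with $d(x_n,p)\le\varepsilon/2$ throughout the interval --- this is where the precision $\sin^2(\varepsilon\sqrt{\kappa}/4)$ enters --- so that $N=\Theta_{K_0}(\sin^2(\varepsilon\sqrt{\kappa}/4))$, and the triangle inequality gives $d(x_n,x_m)\le\varepsilon$. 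Unwinding all the nested rates produces the stated bound $\Sigma(\varepsilon,g,\kappa,M,\theta,\alpha,\gamma)$ with the constants and functionals collected in Table~\ref{tabel-1}.

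The main obstacle is the geometric bookkeeping inside the CAT$(\kappa)$ comparison triangles. Unlike in CAT$(0)$, the ``inner product'' relations are only trigonometric inequalities, valid and well-behaved solely because $d_C < D_\kappa/2$; each step of the CAT$(0)$ argument that uses an exact algebraic identity (the expansion of $\|(1-\lambda)x+\lambda y-z\|^2$, the parallelogram law, bilinearity of the inner product) must be replaced by an inequality carrying a $\cos(M\sqrt{\kappa})$-type loss, and one must verify that these losses do not accumulate beyond what the Cauchy modulus $\gamma$ and the divergence rate $\theta$ can absorb. A secondary, purely quantitative, difficulty is making the learning functional $\widetilde{f^*}$ and the composite rate $\Theta_{K_0}$ fully explicit: this requires tracking the dependence of the error terms $e_n$ on $\gamma$, $\alpha$ and on $\Phi$ from Proposition~\ref{effective-as-reg-divergent-sum}, and then checking the monotonicity of all auxiliary functionals so that the nested composition yielding $\Sigma$ is legitimate.
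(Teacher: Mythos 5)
Your high-level plan --- replace the CAT$(0)$ squared-distance identities by the surrogate $s(a,b)=\sin^2\bigl(\tfrac{\sqrt{\kappa}}{2}d(a,b)\bigr)$ at the cost of $\cos(M\sqrt{\kappa})$-type losses, derive a recursion $s(x_{n+1},p)\le(1-\lambda_{n+1})s(x_n,p)+\lambda_{n+1}r_n(p)+e_n$, control an iteration bounded by $B_{\varepsilon,\kappa,M}$ giving $K_0\le\widetilde{f^*}^{B_{\varepsilon,\kappa,M}}(0)+\lceil 1/\varepsilon_0\rceil$, and finish with a quantitative real-sequence lemma and the triangle inequality --- matches the architecture of the paper's proof. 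The technical content you flag (the trigonometric inequalities replacing quasilinearization, the $\cos(M\sqrt{\kappa})$ factor, the need to check monotonicity of the nested functionals) is also exactly where the work is.

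However, your description of the Banach-limit elimination step is not what actually happens and, as stated, would not go through. The resolvent curve $z_t^u=tu+(1-t)Tz_t^u$ is not replaced by an abstract finite family of candidates; it is retained and is central. The quantity that can change only boundedly often is not $\liminf_n r_n(p)$ but the monotone nondecreasing, bounded sequence $n\mapsto d(u,z_{t_n}^u)$: the factor $B_{\varepsilon,\kappa,M}$ and the iterate $\widetilde{f^*}^{B_{\varepsilon,\kappa,M}}(0)$ come from the quantitative Browder theorem (Proposition~\ref{prop-meta-ztiu}), which rests on Lemma~\ref{lemma_meta_1} together with Kohlenbach's quantitative monotone-convergence lemma \cite[Lemma~4.1]{Koh11} applied to $(d(u,z_{t_n}^u))_n$. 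That step is carried out \emph{before} any metastability claim about $(x_n)$ is considered; it yields an interval $[K_0,K_0+f(K_0)]$ on which the resolvent sequence has stabilized, a single index $J$ in that interval is fixed, and $z_J^u$ becomes the one approximate center $p$. Lemma~\ref{quant-Aoyama-all-meta-rate-0} is then applied with $s_n=\sin^2\bigl(\tfrac{\sqrt{\kappa}}{2}d(x_n,z_J^u)\bigr)$, $t_n=\max\{\gamma_n^J/\cos(M\sqrt{\kappa}),0\}$ and $\alpha_n=\mu_{n+1}$, using Proposition~\ref{prop-gammant-limsup-0} and Lemma~\ref{lemma-Bwd-2} to transfer the $\limsup$-estimate from $K_0$ to $J$. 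There is no ``try metastability for $(x_n)$, fail, switch candidate'' loop: the counterfunction $g$ and the rate $\Theta_i$ are merely wired into $f^*$ in advance so that, once the resolvent sequence is stable on a long enough interval, the real-sequence lemma forces $d(x_n,z_J^u)\le\varepsilon/2$ on $[N,N+g(N)]$. You should replace your ``potential-drop on $\liminf_n r_n(p)$'' description by this two-stage structure (quantitative Browder, then quantitative Xu/Aoyama-type lemma), since otherwise there is no source of a candidate $p$ near $\mathrm{Fix}(T)$ to begin the argument with.
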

\begin{proof}
We refer to Section \ref{section-effective-rates-meta} for the  proof. We point here only
the main steps:
\be
\item extract  a rate of asymptotic regularity (this is done in Proposition \ref{effective-as-reg-divergent-sum});
\item obtain a quantitative Browder theorem (see Proposition \ref{prop-meta-ztiu});
\item define in an appropriate way an approximate fixed point sequence $\gamma_n^t$ (see \eqref{def-gamma-nt});
\item apply Lemma \ref{quant-Aoyama-all-meta-rate-0}, a quantitative lemma on sequences of 
real numbers.
\ee
\end{proof}

\begin{table}[h!]
\begin{center}
\scalebox{0.95}{\begin{tabular}{ | l | }
\hline
\ \\
$\ds B_{\eps,\kappa,M}=\left\lceil \frac{M\sqrt{\kappa}\tan(M\sqrt{\kappa})}{1-\cos(\varepsilon_0)}\right\rceil, \quad \eps_0 = \frac{\cos(M\sqrt{\kappa})}{36}\sin^2 \frac{\varepsilon\sqrt{\kappa}}{4}$, \\
\ \\

$\ds A_{\eps,\kappa,M,\theta,\alpha,\gamma}(n)=\theta^+\left(\left\lceil\frac{1}{\cos(M\sqrt{\kappa})}\right\rceil
\left(\Gamma(n)-1+\max\left\{S,1\right\}\right)\right)+1$,\\
\ \\

$\ds \Gamma(n) =\max \left\{\chi^*_i\left(\frac{1}{3}\sin^2\frac{\varepsilon\sqrt{\kappa}}{4}\right) : 
\left\lceil\frac{1}{\varepsilon_0}\right\rceil\le i\le n \right\}, \quad \theta^+(n)= \max_{1\le i\le n} \theta(i)$,\\ 
\ \\

$\ds S=\left\lceil\ln\left(\frac{3\sin^2\frac{M\sqrt{\kappa}}4}{\sin^2 \frac{\varepsilon\sqrt{\kappa}}{4}}\right)\right\rceil, \quad \chi^*_i(\varepsilon) = \chi_i\left(\frac{\varepsilon}{2}\cos(M\sqrt{\kappa})\right), \quad L_i = \frac{\cos(M\sqrt{\kappa})\eps}{4M\sqrt{\kappa}(i+1)}$,\\ 
\ \\

$\ds \chi_i(\eps) =\max\left\{\theta\left(\left\lceil\frac1{\cos(M\sqrt{\kappa})}\right\rceil\left(\gamma(L_i)+
\max\left\{\left\lceil\ln\left(\frac{1}{L_i}\right)\right\rceil,1\right\}\right)\right),\alpha(2L_i)\right\}$,\\
\ \\

$\ds \Theta_i(\varepsilon) = \theta\left(\left\lceil\frac{1}{\cos (M\sqrt{\kappa})}\right\rceil
\left(\chi^*_i\left(\frac{\varepsilon}{3}\right)-1+
\max\left\{T,1\right\}\right)\right)+1$,\\
\ \\

$\ds T= \left\lceil\ln\left(\frac{3}{\varepsilon}\sin^2\frac{M\sqrt{\kappa}}2\right)\right\rceil, \quad \Delta^*_i(\varepsilon,g)=\frac{\varepsilon}
{3\Theta_i(\varepsilon)-3\chi^*_i(\frac{\varepsilon}{3})+3
g\left(\Theta_i(\varepsilon)\right)}$,\\ 
\ \\
 
$\ds f(i) = \max\left\{\left\lceil\frac{M\sqrt{\kappa}}{\Delta^*_i(
\sin^2\frac{\varepsilon\sqrt{\kappa}}{4},g)}\right\rceil, i\right\}-i, \quad f^*(i) = f\left(i+\left\lceil\frac{1}{\varepsilon_0}\right\rceil\right)+\left\lceil\frac{1}{\varepsilon_0}\right\rceil$\\
\ \\
and $\widetilde{f^*}(i)= i+f^*(i)$.\\
\ \\
\hline
\end{tabular}}
\caption{$\,\,$}\label{tabel-1}
\end{center}
\end{table}

Hence, we compute a rate of metastability which is uniform in the starting point $x_0$ of the iteration and the nonexpansive mapping $T$. Moreover, it depends on the space $X$ and the set $C$ only via $\kappa$ and the diameter $d_C$ of $C$. The dependence on $(\lambda_n)$ is through the rates $\alpha,\gamma,\theta$, which can be computed very easily for the natural choice $\ds\lambda_n=\frac{1}{n+1}$.  

Furthermore, as in \cite{KohLeu12,KohLeu12a} as well as in other case studies in proof mining, the rate of metastability has the form  described by Kohlenbach and Safarik  \cite{KohSaf13}. 
Thus, $g$ does not appear at all in the definition of  the mappings $A_{\eps,\kappa,M,\theta,\alpha,\gamma}$ and $B_{\eps,\kappa,M}$, and $\widetilde{f^*}(i)$ only uses $g$ on one argument, $\Theta_i(\sin^2(\eps\sqrt{\kappa}/4))$,  
which itself does not depend on $g$.  We refer to \cite{KohSaf13} for a logical explanation of this phenomenon in terms of effective learnability and bounds on the number of mind changes.

If $\ds \lambda_n=\frac1{n+1}$, with $\alpha, \gamma, \theta$ given by 
\eqref{1n+1-alpha-beta-theta}, one can easily see that $(\chi^*_i)_i$ is nondecreasing. As a consequence we obtain the following:

\bcor\label{meta-lambdan-1n+1}
Assume that $\ds \lambda_n=\frac1{n+1}$ for all $n\ge 1$. Then for all $\eps\in (0,2)$ and  $g:\N\to\N$,
\bua
\exists N\le \Sigma(\eps,g,\kappa, M)\,\,\forall m,n\in[N,N+g(N)]\,\,(d(x_n,x_m)\le \eps),
\eua
where 
\[
\Sigma(\eps,g,\kappa, M)=A_{\eps,\kappa,M} \left(\widetilde{f^*}^{B_{\eps,\kappa,M}}(0)+
\left\lceil\frac{1}{\varepsilon_0}\right\rceil\right),\]
with 
\begin{align*}
A_{\eps,\kappa,M}(n) & =\exp\left(\left(\left\lceil\frac{1}{\cos(M\sqrt{\kappa})}\right\rceil
\left(\Gamma(n)-1+\max\left\{S,1\right\}\right)+1\right)\ln 4\right)+1,\\
\Gamma(n) &=\chi^*_n\left(\frac{1}{3}\sin^2\frac{\varepsilon\sqrt{\kappa}}{4}\right),\\
\chi_i(\eps) &= \exp\left(\left(\left\lceil\frac1{\cos(M\sqrt{\kappa})}\right\rceil\left(\left\lceil\frac{1}{L_i}\right\rceil+
\max\left\{\left\lceil\ln\left(\frac{1}{L_i}\right)\right\rceil,1\right\}\right)+1\right)\ln4\right), \\
\Theta_i(\varepsilon) &= \exp\left(\left(\left\lceil\frac{1}{\cos (M\sqrt{\kappa})}\right\rceil
\left(\chi^*_i\left(\frac{\varepsilon}{3}\right)-1+
\max\left\{T,1\right\}\right)+1\right)\ln 4\right)+1
\end{align*}
and the other constants and functionals are defined as in Theorem \ref{main-theorem-metastability}.
\ecor

\section{Some technical lemmas}\label{section-CATk-technical-lemmas}

Throughout the paper, we shall use the following well-known facts:
\be
\item $x\geq \sin x$ for all $x\geq 0$.
\item $\sin(tx)\geq t\sin x$ for all $x\in[0,\pi]$ and all $t\in [0,1]$.
\item The function $f:(0,\pi)\to (0,1)$, $\ds f(x)=\frac{\sin x}x$ is decreasing.
\item Given  $t \in [0,1]$, the mapping $f:(0,\pi)\to(0,\infty)$,$ \, \ds f(x) = \frac{\sin(tx)}{\sin x}$ is increasing. 
\ee

The following very useful result is proved in \cite{Pia11} for $\kappa=1$. The proof for general $\kappa>0$ is an immediate rescaling. 
\begin{lemma}\label{CATK-ABC-convex}
Let  $\Delta(x,y,z)$ be a triangle in $X$ and $M\le \frac{D_\kappa}{2}$ 
be an upper bound on the lengths of the sides of $\Delta(x,y,z)$. Then for all  $t \in (0,1)$, 
\[
d((1 - t)x + tz,(1-t)y + tz) \le \frac{\sin \big((1-t)M\sqrt{\kappa}\big)}{\sin \big(M\sqrt{\kappa}\big)} d(x,y) \leq d(x,y).
\]
\end{lemma}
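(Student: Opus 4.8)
The plan is to reduce the problem to the model space $M^2_\kappa$ and carry out an explicit spherical computation. First, since $\kappa > 0$, rescaling the metric by $\sqrt{\kappa}$ transforms $M^2_\kappa$ into $\mathbb{S}^2$ and multiplies all distances by $\sqrt{\kappa}$; thus it suffices to prove the inequality for $\kappa = 1$, assuming the sides of $\Delta(x,y,z)$ have length at most $M\sqrt{\kappa} \le D_\kappa/2 \cdot \sqrt{\kappa} = \pi/2$ in the rescaled picture. Since the perimeter of $\Delta(x,y,z)$ is then at most $3\pi/2 < 2\pi = 2D_1$, a $1$-comparison triangle $\bar\Delta(\bar x, \bar y, \bar z)$ in $\mathbb{S}^2$ exists and is unique up to isometry. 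Let $\bar p = (1-t)\bar x + t\bar z$ and $\bar q = (1-t)\bar y + t\bar z$ be the comparison points of $p = (1-t)x + tz$ and $q = (1-t)y+tz$. By the CAT$(\kappa)$ inequality, $d(p,q) \le d_{\mathbb{S}^2}(\bar p, \bar q)$, so it is enough to bound $d_{\mathbb{S}^2}(\bar p, \bar q)$ by $\tfrac{\sin((1-t)M\sqrt{\kappa})}{\sin(M\sqrt{\kappa})}\, d_{\mathbb{S}^2}(\bar x, \bar y)$.

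Next I would set up coordinates in $\mathbb{S}^2$. Write $a = d_{\mathbb{S}^2}(\bar x, \bar z)$, $b = d_{\mathbb{S}^2}(\bar y, \bar z)$, both at most $\pi/2$, and let $\gamma$ be the spherical angle at $\bar z$ between $[\bar z, \bar x]$ and $[\bar z, \bar y]$. Then $\bar p$ lies on $[\bar z, \bar x]$ at distance $(1-t)a$ from $\bar z$ and $\bar q$ lies on $[\bar z, \bar y]$ at distance $(1-t)b$ from $\bar z$, subtending the same angle $\gamma$ at $\bar z$. The spherical law of cosines applied to the triangle $\bar z \bar p \bar q$ gives
\[
\cos d_{\mathbb{S}^2}(\bar p, \bar q) = \cos((1-t)a)\cos((1-t)b) + \sin((1-t)a)\sin((1-t)b)\cos\gamma,
\]
and applied to $\bar z \bar x \bar y$ gives $\cos d_{\mathbb{S}^2}(\bar x, \bar y) = \cos a \cos b + \sin a \sin b \cos\gamma$. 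The goal is now the purely trigonometric inequality: if $a, b \le M\sqrt{\kappa} \le \pi/2$ and $t \in (0,1)$, then the distance $d_1$ determined by the first display satisfies $d_1 \le \tfrac{\sin((1-t)M\sqrt{\kappa})}{\sin(M\sqrt{\kappa})}\, d_2$, where $d_2$ is the distance from the second.

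The analytic core — and the step I expect to be the main obstacle — is this last trigonometric estimate. The natural strategy is to fix $\gamma$ and $t$ and show that the ratio $d_1/d_2$ is maximized when $a = b = M\sqrt{\kappa}$ (a monotonicity argument in $a$ and $b$, using that on $[0,\pi/2]$ the relevant functions behave well), so that it suffices to treat the isoceles case $a = b =: m \le \pi/2$. In that case both laws of cosines simplify: using $\cos d = 1 - 2\sin^2(d/2)$, one gets $\sin^2(d_1/2) = \sin^2((1-t)m)\sin^2(\gamma/2) + \text{(terms)}$, and a comparable identity for $d_2$; then the elementary facts listed in Section \ref{section-CATk-technical-lemmas} — in particular that $x \mapsto \sin(tx)/\sin x$ is increasing on $(0,\pi)$ and $\sin(tx) \ge t\sin x$ — are exactly what is needed to conclude $\sin(d_1/2) \le \tfrac{\sin((1-t)m)}{\sin m}\sin(d_2/2)$, and hence, via $x \ge \sin x$ and convexity/concavity of $\sin$ on the relevant range, $d_1 \le \tfrac{\sin((1-t)m)}{\sin m} d_2$. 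The final inequality $\tfrac{\sin((1-t)M\sqrt{\kappa})}{\sin(M\sqrt{\kappa})} d(x,y) \le d(x,y)$ is immediate since $(1-t) < 1$ makes the coefficient at most $1$. Since the paper states this lemma is proved in \cite{Pia11} for $\kappa = 1$, I would in practice cite that computation and supply only the rescaling remark; the above is the route one would take to verify it from scratch.
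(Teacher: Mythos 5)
The paper does not actually prove this lemma: it cites Pi\c{a}tek \cite{Pia11} for the case $\kappa=1$ and observes that the general case follows by rescaling, which is exactly the route you settle on in your last sentence. So your ultimate plan agrees with the paper. However, your sketch of a self-contained argument has a genuine gap at the step you yourself flag as ``the main obstacle'': the reduction to the isoceles case $a=b=M\sqrt{\kappa}$. Asserting ``a monotonicity argument in $a$ and $b$'' is not a proof --- the ratio $d_1/d_2$ depends jointly on $a$, $b$ and $\gamma$, and it is not clear a priori that for fixed $\gamma,t$ it is separately monotone in $a$ and $b$, nor that its maximum over the box $[0,M\sqrt{\kappa}]^2$ sits at the corner. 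You would need to actually establish this, and it is not routine.

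Fortunately the isoceles reduction can be bypassed entirely. Set $m_0=M\sqrt{\kappa}$ and $\rho=\sin((1-t)m_0)/\sin m_0$. Using $\cos A\cos B+\sin A\sin B\cos\gamma=\cos(A-B)-\sin A\sin B(1-\cos\gamma)$, the two spherical laws of cosines at $\bar z$ rewrite as
\begin{align*}
\sin^2\tfrac{d_1}{2}&=\sin^2\tfrac{(1-t)(a-b)}{2}+\sin((1-t)a)\sin((1-t)b)\sin^2\tfrac{\gamma}{2},\\
\sin^2\tfrac{d_2}{2}&=\sin^2\tfrac{a-b}{2}+\sin a\,\sin b\,\sin^2\tfrac{\gamma}{2}.
\end{align*}
Since $|a-b|/2$, $a$ and $b$ all lie in $[0,m_0]\subseteq[0,\pi/2]$, fact (iv) of Section~\ref{section-CATk-technical-lemmas} (monotonicity of $x\mapsto\sin(sx)/\sin x$) bounds each summand for $\sin^2(d_1/2)$ by $\rho^2$ times the corresponding summand for $\sin^2(d_2/2)$, term by term, with no reduction to the isoceles case. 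Hence $\sin(d_1/2)\le\rho\sin(d_2/2)\le\sin(\rho d_2/2)$ by fact (ii), and since $d_1/2$ and $\rho d_2/2$ both lie in $[0,\pi/2]$ where sine is increasing, $d_1\le\rho d_2$. Undoing the rescaling and invoking the CAT$(\kappa)$ comparison yields the first inequality; $\rho\le 1$ is immediate, giving the second. (Incidentally, in the isoceles case the identity $\sin^2(d_1/2)=\sin^2((1-t)m)\sin^2(\gamma/2)$ is exact, so the ``$+\text{(terms)}$'' in your sketch should not be there.)
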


Let $X$ be a CAT$(\kappa)$ space. The next results gather some useful properties which will be needed in the subsequent sections.

\begin{lemma}\label{lemma_meta_1}
Let $\Delta(x,y,z)$ be a triangle in $X$ with perimeter  $< 2D_\kappa$. 
Let $w$ be a point on the segment joining $x$ and $z$. Suppose that $\cos (d(y,z)\sqrt{\kappa}) \ge \cos (d(y,w)\sqrt{\kappa}) \cos (d(w,z)\sqrt{\kappa})$.
 Then $d(x,w) \leq  d(x,y)$. 
Moreover, if $\Delta(\bar{x},\bar{y},\bar{z})$ is a $\kappa$-comparison triangle for $\Delta(x,y,z)$, then $\angle_{\bar{w}}(\bar{y},\bar{x}) \ge \frac{\pi}{2}$.
\end{lemma}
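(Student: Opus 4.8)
The plan is to transfer everything to a $\kappa$-comparison triangle and argue purely by spherical trigonometry, using the CAT$(\kappa)$ inequality as the only input from the ambient space. Exactly as for Lemma~\ref{CATK-ABC-convex}, by rescaling the metric we may assume $\kappa=1$, so that $M^2_\kappa=\mathbb{S}^2$ and $D_\kappa=\pi$. As $\Delta(x,y,z)$ has perimeter $<2D_\kappa$, it admits a $\kappa$-comparison triangle $\bar\Delta(\bar x,\bar y,\bar z)$ in $\mathbb{S}^2$; let $\bar w\in[\bar x,\bar z]$ be the comparison point of $w$. We may assume $w\neq x$ (otherwise $d(x,w)=0$ and there is nothing to prove), and, as in the intended applications, that $\bar w$ lies in the interior of $[\bar x,\bar z]$. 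Then $d(\bar x,\bar w)=d(x,w)$, $d(\bar w,\bar z)=d(w,z)$, $d(\bar y,\bar z)=d(y,z)$, $d(\bar x,\bar y)=d(x,y)$, while the CAT$(\kappa)$ inequality provides the one nontrivial relation $d(y,w)\le d(\bar y,\bar w)$.

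I would first prove the \emph{moreover} assertion. Applying the spherical law of cosines in the triangle $\bar y\bar w\bar z$ to the angle at $\bar w$, and noting that its denominator $\sin d(\bar y,\bar w)\sin d(\bar w,\bar z)$ is nonnegative, we see that $\angle_{\bar w}(\bar y,\bar z)\le\pi/2$ is equivalent to
\[\cos d(\bar y,\bar z)\ \ge\ \cos d(\bar y,\bar w)\,\cos d(\bar w,\bar z).\]
Now combine the hypothesis $\cos d(y,z)\ge\cos d(y,w)\cos d(w,z)$ with $d(y,w)\le d(\bar y,\bar w)$ and the fact that $\cos$ is decreasing on $[0,\pi]$: in the main case $\cos d(w,z)\ge0$ we get $\cos d(y,w)\cos d(w,z)\ge\cos d(\bar y,\bar w)\cos d(w,z)$, so the displayed inequality follows at once (recall $d(\bar w,\bar z)=d(w,z)$ and $d(\bar y,\bar z)=d(y,z)$). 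Since $\bar w$ is interior to the geodesic $[\bar x,\bar z]$, we have $\angle_{\bar w}(\bar y,\bar x)+\angle_{\bar w}(\bar y,\bar z)=\pi$, whence $\angle_{\bar w}(\bar y,\bar x)\ge\pi/2$.

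For the inequality $d(x,w)\le d(x,y)$ I would feed this back into the triangle $\bar x\bar y\bar w$: the spherical law of cosines at $\bar w$ reads $\cos d(\bar x,\bar y)=\cos d(\bar x,\bar w)\cos d(\bar w,\bar y)+\sin d(\bar x,\bar w)\sin d(\bar w,\bar y)\cos\angle_{\bar w}(\bar y,\bar x)$, and since $\cos\angle_{\bar w}(\bar y,\bar x)\le0$, $\cos d(\bar w,\bar y)\le1$ and (in the main case) $\cos d(\bar x,\bar w)=\cos d(x,w)\ge0$, this forces $\cos d(\bar x,\bar y)\le\cos d(\bar x,\bar w)\cos d(\bar w,\bar y)\le\cos d(\bar x,\bar w)$; monotonicity of $\cos$ on $[0,\pi]$ then yields $d(x,w)=d(\bar x,\bar w)\le d(\bar x,\bar y)=d(x,y)$.

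The main obstacle is the sign bookkeeping for the cosines, namely the cases where $d(w,z)$ or $d(x,w)$ exceeds $D_\kappa/2$ and the relevant cosine becomes negative, so the one-line estimates above break. Here more of the geometry is needed: the perimeter bound gives $d(x,z)=d(x,w)+d(w,z)<D_\kappa$, so $d(w,z)$ and $d(x,w)$ cannot both reach $D_\kappa/2$; and the hypothesis, which really says that the comparison angle at $w$ in the triangle $\{y,w,z\}$ is at most $\pi/2$, combined with the triangle inequalities $d(\bar y,\bar w)\ge d(y,w)\ge|d(y,z)-d(w,z)|$ and the bound $d(\bar y,\bar w)<2D_\kappa-d(y,z)-d(w,z)$ valid in a spherical triangle, disposes of the remaining sign combinations through elementary $\cos(\alpha\pm\beta)$ identities, each reducing to an inequality of the shape $\sin(\cdot)\ge0$. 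This is the only point where the full strength of the hypothesis, rather than merely the CAT$(\kappa)$ inequality, enters.
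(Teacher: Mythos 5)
Your ``main case'' argument is correct and essentially coincides with the paper's: pass to a $\kappa$-comparison triangle, apply the spherical law of cosines at $\bar w$ in $\Delta(\bar y,\bar w,\bar z)$ together with $d(y,w)\le d(\bar y,\bar w)$ to get $\angle_{\bar w}(\bar y,\bar z)\le\pi/2$, pass to the supplementary angle, and apply the cosine law again in $\Delta(\bar x,\bar y,\bar w)$. That you prove the angle bound directly while the paper argues by contradiction is cosmetic. The two sign assumptions you isolate, $\cos(d(w,z)\sqrt{\kappa})\ge0$ and $\cos(d(x,w)\sqrt{\kappa})\ge0$, are exactly what the paper's proof uses tacitly in its two product estimates, so up to that point you and the paper are in complete agreement; you are simply more explicit.

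Your closing paragraph, however, is mistaken: the ``remaining sign combinations'' cannot be disposed of, because the conclusion genuinely fails there. Take $X=\mathbb{S}^2$ (so $\kappa=1$, $D_\kappa=\pi$), $w$ the north pole, $x$ at spherical distance $3\pi/4$ from $w$, $z$ a small distance $\delta<\pi/4$ past $w$ on the same great circle, and $y$ at small distance $\epsilon$ from $w$ in the direction perpendicular to that great circle. The perimeter is $<2\pi$, the hypothesis holds with equality ($\cos d(y,z)=\cos\epsilon\cos\delta$), yet $\cos d(x,y)=-\frac{\sqrt{2}}{2}\cos\epsilon>-\frac{\sqrt{2}}{2}=\cos d(x,w)$, so $d(x,y)<d(x,w)$. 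Thus the lemma really does need $d(x,w)\le D_\kappa/2$ (and, for the \emph{moreover} part in a general CAT$(\kappa)$ space, $d(w,z)\le D_\kappa/2$); the paper never says so, but it only ever invokes the lemma for points of a set of diameter $<D_\kappa/2$, where these hold automatically. Your instinct to flag the signs was sound; the sketch of a repair at the end should simply be dropped.
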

\solution{
Let $\Delta(\bar{x},\bar{y},\bar{z})$ be a $\kappa$-comparison triangle for $\Delta(x,y,z)$
and $\alpha = \angle_{\bar{w}}(\bar{y},\bar{z})$. Suppose that $\ds \alpha > \frac{\pi}{2}$. Then
\bua
\cos (d(y,z)\sqrt{\kappa}) &  =  & \cos (d(\bar{y},\bar{w})\sqrt{\kappa})\cos (d(\bar{w},\bar{z})\sqrt{\kappa})\\
&&  + \sin (d(\bar{y},\bar{w})\sqrt{\kappa})\sin (d(\bar{w},\bar{z})\sqrt{\kappa})\cos \alpha\\ 
& < & \cos (d(\bar{y},\bar{w})\sqrt{\kappa})\cos (d(\bar{w},\bar{z})\sqrt{\kappa})\\
& \le & \cos (d(y,w)\sqrt{\kappa})\cos (d(w,z)\sqrt{\kappa}),
\eua
which contradicts the hypothesis. Thus, $\alpha \le \frac{\pi}{2}$ and  $\ds \beta=\angle_{\bar{w}}(\bar{y},\bar{x}) \ge \frac{\pi}{2}$. It follows that 
\bua
\cos (d(\bar{x},\bar{y})\sqrt{\kappa}) 
&  =  & \cos (d(\bar{x},\bar{w})\sqrt{\kappa})\cos (d(\bar{w},\bar{y})\sqrt{\kappa})\\
&& + \sin (d(\bar{x},\bar{w})\sqrt{\kappa})\sin (d(\bar{w},\bar{y})\sqrt{\kappa})\cos \beta\\ 
& \leq  &  \cos (d(\bar{x},\bar{w})\sqrt{\kappa}),
\eua
hence $d(\bar{x},\bar{y})\geq d(\bar{x},\bar{w})$. Thus, $d(x,w) \leq d(x,y)$.
}

Assume $C \subseteq X$ is bounded with $M<\frac{D_\kappa}2$ an upper bound on its diameter. 
In the sequel $x,y,z$ are pairwise distinct points of $C$ and $w\in[x,y], \, v\in [x,z]$.

We shall use the following notation:
\begin{align*}
S_1 & = \sin (d(x,w)\sqrt{\kappa}) \sin (d(x,v)\sqrt{\kappa}), \quad  S_2 = \sin (d(x,y)\sqrt{\kappa})\sin (d(x,z)\sqrt{\kappa}),\\
S_3 & = \sin (d(x,w)\sqrt{\kappa})\sin (d(x, z)\sqrt{\kappa}), \quad  S_4  =  \sin (d(y,w)\sqrt{\kappa}) \sin (d(x, z)\sqrt{\kappa}),\\
S_5 & = \sin (d(x,w)\sqrt{\kappa})\sin (d(z,v)\sqrt{\kappa}), \\
C_1 & = \cos (d(x,w)\sqrt{\kappa}) \cos (d(x,v)\sqrt{\kappa}), \quad C_2 = \cos (d(x,y)\sqrt{\kappa}) \cos (d(x,z)\sqrt{\kappa}).
\end{align*}

\blem
\begin{align}
S_2-S_3 & \leq S_4\cos (d(x,w)\sqrt{\kappa}), \label{s2-s3}\\
S_3-S_1 & \leq S_5 \cos(d(x,v)\sqrt{\kappa}), \label{s3-s1}\\
S_2C_1-S_1C_2 & = S_4\cos (d(x, v)\sqrt{\kappa})+S_5\cos (d(x, y)\sqrt{\kappa}), \label{s2c1-s1c2}\\
S_2-S_3- S_4\cos (d(x, v)\sqrt{\kappa}) & \leq 2S_4\left(\sin^2\frac{d(x,v)\sqrt{\kappa}}2-\sin^2\frac{d(x,w)\sqrt{\kappa}}2\right),\label{s2-s3-s4}\\
S_3-S_1 - S_5\cos (d(x, y)\sqrt{\kappa}) & \leq 2S_5\left(\sin^2\frac{d(x,y)\sqrt{\kappa}}2 - \sin^2\frac{d(x,v)\sqrt{\kappa}}2\right).\label{s3-s1-s5}
\end{align}

\elem
\solution{
\begin{align*}
S_2-S_3 & = \big(\sin (d(x,y)\sqrt{\kappa})- \sin (d(x,w)\sqrt{\kappa})\big)\sin (d(x, z)\sqrt{\kappa})\\
& = 2 \sin \frac{(d(x, y)-d(x,w))\sqrt{\kappa}}{2} \cos \frac{(d(x,y)+d(x,w))\sqrt{\kappa}}2 
\sin (d(x, z)\sqrt{\kappa})\\ 
& = 2 \sin \frac{d(y,w)\sqrt{\kappa}}{2} \cos \left(\left(d(x,w)+\frac{d(y ,w)}{2}\right)\sqrt{\kappa}\right) 
\sin (d(x, z)\sqrt{\kappa})\\ 
& \le 2 \sin \frac{d(w, y)\sqrt{\kappa}}{2}\cos(d(x, w)\sqrt{\kappa})\cos\frac{d(w, y)\sqrt{\kappa}}{2}\sin (d(x, z)\sqrt{\kappa})\\
& = \sin (d(w, y)\sqrt{\kappa}) \cos (d(x,w)\sqrt{\kappa}) \sin (d(x, z)\sqrt{\kappa})= S_4 \cos (d(x,w)\sqrt{\kappa}). 
\end{align*}
Similarly, one gets that $S_3-S_1 \le S_5 \cos(d(x,v)\sqrt{\kappa})$.
\begin{align*}
S_2C_1-S_1C_2 & = \sin (d(x, y)\sqrt{\kappa})\sin (d(x, z)\sqrt{\kappa})\cos (d(x, w)\sqrt{\kappa}) \cos (d(x, v)\sqrt{\kappa})\\
& \quad - \sin (d(x, w)\sqrt{\kappa}) \sin (d(x, v)\sqrt{\kappa})\cos (d(x, y)\sqrt{\kappa}) \cos (d(x, z)\sqrt{\kappa})\\
&= \sin (d(x, z)\sqrt{\kappa})\cos (d(x, v)\sqrt{\kappa})\sin\big((d(x, y)-d(x, w))\sqrt{\kappa}\big)\\
& \quad + \sin (d(x, w)\sqrt{\kappa})\cos (d(x, y)\sqrt{\kappa})\sin\big((d(x, z)-d(x,v))\sqrt{\kappa}\big)\\
& = \sin (d(x, z)\sqrt{\kappa})\cos (d(x, v)\sqrt{\kappa})\sin(d(y,w)\sqrt{\kappa})\\
& \quad + \sin (d(x, w)\sqrt{\kappa})\cos (d(x, y)\sqrt{\kappa})\sin (d(z,v)\sqrt{\kappa})\\
& = S_4\cos (d(x, v)\sqrt{\kappa})+S_5\cos (d(x, y)\sqrt{\kappa}).
\end{align*}
Items (\ref{s2-s3-s4}) and (\ref{s3-s1-s5}) follow easily from (\ref{s2-s3}) and (\ref{s3-s1}), respectively.
}

\bprop\label{CATk-technical-ineq-1}
\beq
\sin ^2 \frac{d(w, v)\sqrt{\kappa}}{2}  \le \frac{S_1}{S_2} \sin ^2 \frac{d(y, z)\sqrt{\kappa}}{2}+\frac12(1-C_1)-\frac{S_1}{2S_2}(1-C_2).
\eeq
\eprop
\solution{
Let  $\Delta(\bar x, \bar y, \bar z)$  be a $\kappa$-comparison triangle for $\Delta(x,y,z)$. Denote $\alpha = \angle_{\bar x}(\bar y, \bar z)=\angle_{\bar x}(\bar w, \bar v)$. 
Using the cosine law we have
\begin{align*}
\cos (d(\bar w, \bar v)\sqrt{\kappa}) & = \cos (d(\bar x, \bar w)\sqrt{\kappa})\cos (d(\bar x, \bar v)\sqrt{\kappa})\\
& \quad + \sin (d(\bar x, \bar w)\sqrt{\kappa})\sin (d(\bar x, \bar v)\sqrt{\kappa}) \cos \alpha 
\end{align*}
and
\begin{align*}
\cos (d(\bar y, \bar z)\sqrt{\kappa}) & = \cos (d(\bar x, \bar y)\sqrt{\kappa})\cos (d(\bar x, \bar z)\sqrt{\kappa})\\
& \quad + \sin (d(\bar x, \bar y)\sqrt{\kappa})\sin (d(\bar x, \bar z)\sqrt{\kappa}) \cos \alpha. 
\end{align*}
Thus,
\bua
\cos (d(\bar w, \bar v)\sqrt{\kappa}) & = & \cos (d(\bar x, \bar w)\sqrt{\kappa})\cos (d(\bar x, \bar v)\sqrt{\kappa}) \\
& & + \frac{\sin (d(\bar x, \bar w)\sqrt{\kappa}) \sin (d(\bar x, \bar v)\sqrt{\kappa})}{\sin (d(\bar x, \bar y)\sqrt{\kappa}) \sin (d(\bar x, \bar z)
\sqrt{\kappa})}\bigg(\cos (d(\bar y, \bar z)\sqrt{\kappa})\\
&& -  \cos (d(\bar x, \bar y)\sqrt{\kappa})\cos (d(\bar x, \bar z)\sqrt{\kappa})\bigg)\\
& = & \frac{S_1}{S_2}\cos (d(y, z)\sqrt{\kappa})+C_1 -\frac{S_1}{S_2}C_2.
\eua
It follows that 
\bua
\frac{1 - \cos (d(w, v)\sqrt{\kappa})}{2} & \le & \frac{1}{2} + \frac{S_1}{S_2}\left(\frac{1 - \cos (d(y, z)\sqrt{\kappa})}{2} - 
\frac{1}{2}\right)- \frac{1}{2}C_1 + \frac{S_1}{2S_2}C_2.
\eua
Hence,
\bua
\sin ^2 \frac{d(w, v)\sqrt{\kappa}}{2} & \le  & \frac{S_1}{S_2} \sin ^2 \frac{d(y, z)\sqrt{\kappa}}{2}+\frac12(1-C_1)-\frac{S_1}{2S_2}(1-C_2).
\eua
}

\begin{proposition}\label{CATk-technical-ineq-2}
\be
\item 
\begin{align*}
\sin ^2 \frac{d(w, v)\sqrt{\kappa}}{2} & \leq \frac{\sin (d(x,w)\sqrt{\kappa})}{\sin (d(x,y)\sqrt{\kappa})} \sin ^2 \frac{d(y, z)\sqrt{\kappa}}{2}\\
& \quad +\frac{\sin (d(y,w)\sqrt{\kappa})}{\sin (d(x,y)\sqrt{\kappa})}\left(\sin^2\frac{d(x,v)\sqrt{\kappa}}2-\sin^2\frac{d(x,w)\sqrt{\kappa}}2\right)\\
& \quad + \frac{\sin (d(z,v)\sqrt{\kappa})}{\sin (d(x,z)\sqrt{\kappa})}\sin^2\frac{d(x,y)\sqrt{\kappa}}2.
\end{align*}
\item\label{CATk-technical-ineq-2-v2} Assume that $v=sx+(1-s)z, \, s\in [0,1]$ and $w=rx+(1-r)y, \, r\in [0,1]$. Then, 
\begin{align*}
& \sin ^2 \frac{d(w, v)\sqrt{\kappa}}{2} \leq \frac{\sin ((1-r)M\sqrt{\kappa})}{\sin (M\sqrt{\kappa})} \sin ^2 \frac{d(y, z)\sqrt{\kappa}}{2}\\
& \quad +\frac{\sin (rM\sqrt{\kappa})}{\sin (M\sqrt{\kappa})}\max\left\{\sin^2\frac{d(x,v)\sqrt{\kappa}}2-\sin^2\frac{d(x,w)\sqrt{\kappa}}2, 0\right\}\\
& \quad + \frac{\sin (sM\sqrt{\kappa})}{\sin (M\sqrt{\kappa})}\sin^2\frac{M\sqrt{\kappa}}2.
\end{align*}
\ee
\end{proposition}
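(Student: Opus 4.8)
The plan is to derive part (1) directly from the algebraic identities collected in the preceding Lemma and Proposition \ref{CATk-technical-ineq-1}, and then to obtain part (2) by specializing $w$, $v$ to convex combinations and using the monotonicity facts (3)--(4) listed at the start of Section \ref{section-CATk-technical-lemmas}. For part (1), I would start from the estimate in Proposition \ref{CATk-technical-ineq-1},
\[
\sin^2\frac{d(w,v)\sqrt{\kappa}}{2}\le \frac{S_1}{S_2}\sin^2\frac{d(y,z)\sqrt{\kappa}}{2}+\frac12(1-C_1)-\frac{S_1}{2S_2}(1-C_2),
\]
and rewrite the last two terms over the common denominator $2S_2$. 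This gives a numerator of the form $S_2(1-C_1)-S_1(1-C_2)=(S_2-S_1)-(S_2C_1-S_1C_2)$. Now I would insert $S_3$ as an intermediate term, writing $S_2-S_1=(S_2-S_3)+(S_3-S_1)$, and substitute the identity \eqref{s2c1-s1c2} for $S_2C_1-S_1C_2$. Combining with the two one-sided bounds \eqref{s2-s3-s4} and \eqref{s3-s1-s5} (which control $S_2-S_3$ and $S_3-S_1$ after subtracting the matching cosine terms), the cross terms $S_4\cos(d(x,v)\sqrt{\kappa})$ and $S_5\cos(d(x,y)\sqrt{\kappa})$ cancel against the corresponding pieces of \eqref{s2c1-s1c2}, and what remains is exactly
\[
\frac{1}{2S_2}\Big(2S_4\big(\sin^2\tfrac{d(x,v)\sqrt{\kappa}}{2}-\sin^2\tfrac{d(x,w)\sqrt{\kappa}}{2}\big)+2S_5\sin^2\tfrac{d(x,y)\sqrt{\kappa}}{2}\Big)+\frac{S_1}{S_2}\sin^2\tfrac{d(y,z)\sqrt{\kappa}}{2}.
\]
Dividing $S_4/S_2$, $S_5/S_2$, $S_1/S_2$ by the common factor $\sin(d(x,z)\sqrt{\kappa})$ (or $\sin(d(x,y)\sqrt{\kappa})$, as appropriate) yields the three ratios $\sin(d(x,w)\sqrt{\kappa})/\sin(d(x,y)\sqrt{\kappa})$, $\sin(d(y,w)\sqrt{\kappa})/\sin(d(x,y)\sqrt{\kappa})$, $\sin(d(z,v)\sqrt{\kappa})/\sin(d(x,z)\sqrt{\kappa})$ appearing in the statement. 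One must of course check that all the sines involved are positive, which holds since $x,y,z$ are pairwise distinct points of $C$ and $M<D_\kappa/2$, so all pairwise distances lie in $(0,D_\kappa/2)$ and all arguments $d(\cdot,\cdot)\sqrt{\kappa}$ lie in $(0,\pi/2)$.

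For part (2), with $v=sx+(1-s)z$ and $w=rx+(1-r)y$ I would bound each of the three ratios from part (1) using $d(x,w)=(1-r)d(x,y)$, $d(y,w)=r\,d(x,y)$, $d(z,v)=s\,d(x,z)$ together with fact (4): for $t\in[0,1]$, $x\mapsto \sin(tx)/\sin x$ is increasing on $(0,\pi)$, so $\sin((1-r)d(x,y)\sqrt{\kappa})/\sin(d(x,y)\sqrt{\kappa})\le \sin((1-r)M\sqrt{\kappa})/\sin(M\sqrt{\kappa})$, and similarly for the $r$- and $s$-coefficients. The coefficient of the middle term is $\sin(rM\sqrt{\kappa})/\sin(M\sqrt{\kappa})\ge 0$, so to preserve the inequality one replaces the possibly-negative bracket $\sin^2\frac{d(x,v)\sqrt{\kappa}}{2}-\sin^2\frac{d(x,w)\sqrt{\kappa}}{2}$ by its positive part $\max\{\cdot,0\}$; likewise in the last term one uses $\sin^2\frac{d(x,y)\sqrt{\kappa}}{2}\le\sin^2\frac{M\sqrt{\kappa}}{2}$ since $d(x,y)\le M$ and $\sin^2$ is increasing on $[0,\pi/2]$. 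Assembling these three estimates gives the claimed inequality.

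The only genuinely delicate point is the bookkeeping in part (1): one has to be careful that after introducing the intermediate term $S_3$ and applying \eqref{s2-s3-s4}, \eqref{s3-s1-s5}, and \eqref{s2c1-s1c2}, the leftover cosine terms cancel exactly rather than leaving an error of the wrong sign. This is a purely algebraic verification using sum-to-product identities (as already carried out in the proof of the Lemma), so I expect no conceptual obstacle, but it is the step where sign errors are easiest to make, and it is worth writing out the numerator manipulation over the denominator $2S_2$ explicitly. Everything else is a routine application of the four elementary monotonicity facts stated at the top of the section.
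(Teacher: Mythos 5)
Your argument is essentially identical to the paper's: both parts use Proposition \ref{CATk-technical-ineq-1}, the identity \eqref{s2c1-s1c2}, and the two one-sided bounds \eqref{s2-s3-s4}, \eqref{s3-s1-s5}, followed by the monotonicity facts (3)--(4) and the $\max\{\cdot,0\}$ device for part (2). One small inaccuracy in your description of part (1): the passage from $S_4/S_2$ to $\sin(d(y,w)\sqrt{\kappa})/\sin(d(x,y)\sqrt{\kappa})$ is a genuine cancellation, but $S_1/S_2$ and $S_5/S_2$ have \emph{no} common factor to cancel; rather you must drop the extra factors $\sin(d(x,v)\sqrt{\kappa})/\sin(d(x,z)\sqrt{\kappa})\le 1$ and $\sin(d(x,w)\sqrt{\kappa})/\sin(d(x,y)\sqrt{\kappa})\le 1$ (which is permissible because they multiply the nonnegative quantities $\sin^2\tfrac{d(y,z)\sqrt{\kappa}}{2}$ and $\sin^2\tfrac{d(x,y)\sqrt{\kappa}}{2}$). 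Similarly, your phrase ``what remains is exactly'' should be ``what remains is bounded above by'', since \eqref{s2-s3-s4} and \eqref{s3-s1-s5} are inequalities and a further nonpositive term $-2S_5\sin^2\tfrac{d(x,v)\sqrt{\kappa}}{2}$ is dropped; these are cosmetic slips that do not affect the soundness of the argument.
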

\solution{
\be
\item We apply Proposition \ref{CATk-technical-ineq-1} to get that 

\begin{align*}
& \sin ^2 \frac{d(w, v)\sqrt{\kappa}}{2}  \le \frac{S_1}{S_2} \sin ^2 \frac{d(y, z)\sqrt{\kappa}}{2}+\frac12(1-C_1)-\frac{S_1}{2S_2}(1-C_2)\\
& \quad =  \frac{S_1}{S_2} \sin ^2 \frac{d(y, z)\sqrt{\kappa}}{2}\\
& \quad\quad + \frac{S_2-S_1-S_4\cos (d(x, v)\sqrt{\kappa})-S_5\cos (d(x, y)\sqrt{\kappa})}{2S_2} \\
& \quad\quad \text{by \eqref{s2c1-s1c2}}\\
& \quad \leq \frac{S_1}{S_2} \sin ^2 \frac{d(y, z)\sqrt{\kappa}}{2}+\frac{S_4}{S_2}\left(\sin^2\frac{d(x,v)\sqrt{\kappa}}2-\sin^2\frac{d(x,w)\sqrt{\kappa}}2\right)\\
& \quad\quad + \frac{S_5}{S_2}\sin^2\frac{d(x,y)\sqrt{\kappa}}2\\
& \quad\quad \text{by \eqref{s2-s3-s4} and \eqref{s3-s1-s5}},
\end{align*}
which yields the desired inequality.


\item We have that 
\begin{align*}
& \sin ^2 \frac{d(w, v)\sqrt{\kappa}}{2}  \le  \frac{\sin ((1-r)d(x,y)\sqrt{\kappa})}{\sin (d(x,y)\sqrt{\kappa})} \sin ^2 \frac{d(y, z)\sqrt{\kappa}}{2}\\
& \quad\quad +\frac{\sin (rd(x,y)\sqrt{\kappa})}{\sin (d(x,y)\sqrt{\kappa})}\max\left\{\sin^2\frac{d(x,v)\sqrt{\kappa}}2-\sin^2\frac{d(x,w)\sqrt{\kappa}}2, 0\right\}\\
& \quad\quad + \frac{\sin (sd(x,z)\sqrt{\kappa})}{\sin (d(x,z)\sqrt{\kappa})}\sin^2\frac{d(x,y)\sqrt{\kappa}}2\\
& \quad \leq   \frac{\sin ((1-r)M\sqrt{\kappa})}{\sin (M\sqrt{\kappa})} \sin ^2 \frac{d(y, z)\sqrt{\kappa}}{2}\\
& \quad\quad +\frac{\sin (rM\sqrt{\kappa})}{\sin (M\sqrt{\kappa})}\max\left\{\sin^2\frac{d(x,v)\sqrt{\kappa}}2-\sin^2\frac{d(x,w)\sqrt{\kappa}}2, 0\right\}\\
& \quad\quad + \frac{\sin (sM\sqrt{\kappa})}{\sin (M\sqrt{\kappa})}\sin^2\frac{M\sqrt{\kappa}}2.
\end{align*}
\ee
}

For the rest of the section, we assume that $v=sx+(1-s)z, \, s\in (0,1)$. We use the additional notation
\[
L_1 = \frac{S_1}{S_3}=\frac{\sin (d(x,v)\sqrt{\kappa})}{\sin (d(x, z)\sqrt{\kappa})}, \quad
L_2 = \frac{S_5}{S_3}= \frac{\sin (d(v,z)\sqrt{\kappa})}{\sin (d(x,z)\sqrt{\kappa})}.
\]

\blem\label{lemma-E}
\bea
0 < 1-L_1 \leq L_2 \cos (d(x,v)\sqrt{\kappa}), \label{1-L1-ineq} \\
\frac{L_1}{1-L_1} \leq \frac{1}{s\cos (M\sqrt{\kappa})}.\label{l1-1-l1-leq}
\eea
\elem
\solution{
\begin{align*}
& (1-L_1)\sin (d(x, z)\sqrt{\kappa}) = \sin(d(x,z)\sqrt{\kappa}) - \sin (d(x,v)\sqrt{\kappa})\\
& \quad = 2 \sin \frac{(d(x,z)-d(x,v))\sqrt{\kappa}}{2}\cos\frac{(d(x,z)+d(x,v))\sqrt{\kappa}}{2}\\
& \quad \le 2 \sin \frac{d(z,v)\sqrt{\kappa}}{2} \cos \frac{d(z,v)\sqrt{\kappa}}{2} \cos (d(x,v)\sqrt{\kappa}) \\
& \quad =  \sin(d(z,v)\sqrt{\kappa}) \cos (d(x,v)\sqrt{\kappa}).
\end{align*}
Thus, $1-L_1 \leq L_2 \cos (d(x,v)\sqrt{\kappa}).$
\begin{align*}
 \frac{L_1}{1-L_1} & =  \frac{\sin (d(x,v)\sqrt{\kappa})}{\sin (d(x,z)\sqrt{\kappa}) - \sin (d(x,v)\sqrt{\kappa})} 
\le \frac{\sin (d(x,z)\sqrt{\kappa})}{\sin (d(x,z)\sqrt{\kappa}) - \sin (d(x,v)\sqrt{\kappa})} \\
& = \frac{\sin (d(x,z)\sqrt{\kappa})}
{2\sin\frac{d(z,v)\sqrt{\kappa}}{2} \cos \left(\left(d(x,z) - \frac{d(z,v)}{2}
\right)\sqrt{\kappa}\right)}\\
& \le \frac{\sin (d(x,z)\sqrt{\kappa})}{2\sin \frac{sd(x,z)\sqrt{\kappa}}{2}\cos (d(x,z)\sqrt{\kappa})} \le  
\frac{1}{s\cos (M\sqrt{\kappa})}.
\end{align*}
}

\bprop\label{CATk-technical-ineq-3}
\be
\item 
\begin{align*}
& \sin ^2 \frac{d(y, v)\sqrt{\kappa}}{2} \le L_1 \sin ^2 \frac{d(y, z)\sqrt{\kappa}}{2}+\frac{1 - L_1}{2} -\frac12 \cos(d(x, y)\sqrt{\kappa})L_2\\
& \quad = L_2\sin^2\frac{d(x, y)\sqrt{\kappa}}2+ \frac12 (1-L_1-L_2)+ L_1 \sin ^2 \frac{d(y, z)\sqrt{\kappa}}{2}.
\end{align*}
\item\label{ineq-used-gammant} Let  $q \in C$ be such that $d(q,z)\leq d(y,v)$. 
Assume that 
\beq
\sin^2\frac{d(x, y)\sqrt{\kappa}}2 - \sin^2 \frac{d(x,v)\sqrt{\kappa}}{2} \le 0. \label{hyp-gammant}
\eeq
Then,
\bua
\sin ^2 \frac{d(y, v)\sqrt{\kappa}}{2} & \leq & 
\sin^2\frac{d(x, y)\sqrt{\kappa}}2 - \sin^2 \frac{d(x,v)\sqrt{\kappa}}{2}\\
&& + \frac{1}{s\cos (M\sqrt{\kappa})}\left(\sin^2 \frac{d(y,q)\sqrt{\kappa}}{2} + 
\sin \frac{d(y,q)\sqrt{\kappa}}{2}\right).
\eua
\ee
\eprop
\solution{
(i) We apply Proposition \ref{CATk-technical-ineq-1} with $w=y$ to get that
\begin{align*}
\sin ^2 \frac{d(y, v)\sqrt{\kappa}}{2} & \le L_1 \sin ^2 \frac{d(y, z)\sqrt{\kappa}}{2}+ \frac12(1-\cos (d(x, y)\sqrt{\kappa}) \cos (d(x, v)\sqrt{\kappa}))\\
&\quad -\frac{1}{2} L_1 (1-\cos (d(x, y)\sqrt{\kappa}) \cos (d(x, z)\sqrt{\kappa}))\\
&= L_1 \sin ^2 \frac{d(y, z)\sqrt{\kappa}}{2}+\frac{1 -L_1}{2}\\
&\quad  - \frac{\cos (d(x, y)\sqrt{\kappa})}{2\sin (d(x, z)\sqrt{\kappa})}\big( \cos (d(x, v)\sqrt{\kappa})\sin (d(x, z)\sqrt{\kappa})-\\
&\quad - \sin (d(x, v)\sqrt{\kappa}) \cos (d(x, z)\sqrt{\kappa})\big)\\
&= L_1 \sin ^2 \frac{d(y, z)\sqrt{\kappa}}{2}+\frac{1- L_1}{2} - \frac{\cos (d(x, y)\sqrt{\kappa})}{2\sin (d(x, z)\sqrt{\kappa})}\sin (d(v,z)\sqrt{\kappa})\\
&= L_1 \sin ^2 \frac{d(y, z)\sqrt{\kappa}}{2}+\frac{1 -L_1}{2}-\frac12 \cos(d(x, y)\sqrt{\kappa})L_2.
\end{align*}
(ii)
\begin{align*}
& \sin ^2 \frac{d(y, v)\sqrt{\kappa}}{2} \le L_2\sin^2\frac{d(x, y)\sqrt{\kappa}}2+ \frac12 (1-L_1-L_2)+ L_1 \sin ^2 \frac{d(y, z)\sqrt{\kappa}}{2}\\
& \quad \le  L_2\sin^2\frac{d(x, y)\sqrt{\kappa}}2+ \frac12 (1-L_1-L_2)+ L_1 \sin ^2 \frac{(d(y,q)+d(q,z))\sqrt{\kappa}}{2}\\
& \quad \le L_2\sin^2\frac{d(x, y)\sqrt{\kappa}}2+ \frac12 (1-L_1-L_2)+ L_1 \sin ^2 \frac{(d(y,q)+d(y,v))\sqrt{\kappa}}{2}\\
& \quad \le L_2\sin^2\frac{d(x, y)\sqrt{\kappa}}2+ \frac12 (1-L_1-L_2)\\
& \quad\quad +  L_1\left(\sin^2 \frac{d(y,q)\sqrt{\kappa}}{2} + \sin^2 \frac{d(y,v)\sqrt{\kappa}}{2} + \frac{1}{2}\sin (d(y,q)\sqrt{\kappa})\right)\\
& \quad\quad \text{since } \sin^2\frac{a+b}{2}\leq \sin^2 \frac{a}{2} + \sin^2 \frac{b}{2} + \frac{1}{2}\sin a \text{ for } a,b\in  \left[0,\pi\right].
\end{align*}

It follows that
\begin{align*}
\sin ^2 \frac{d(y,v)\sqrt{\kappa}}{2} \left(1 - L_1\right) & \leq  L_2\sin^2\frac{d(x, y)\sqrt{\kappa}}2+ \frac12 (1-L_1-L_2)\\
& \quad +  L_1\left(\sin^2 \frac{d(y,q)\sqrt{\kappa}}{2} + \frac{1}{2}\sin (d(y,q)\sqrt{\kappa})\right)\\
& \leq  L_2\sin^2\frac{d(x, y)\sqrt{\kappa}}2+ \frac12 (1-L_1-L_2)\\
& \quad +  L_1\left(\sin^2 \frac{d(y,q)\sqrt{\kappa}}{2} + \sin \frac{d(y,q)\sqrt{\kappa}}{2}\right)\\
& \leq  L_2\sin^2\frac{d(x, y)\sqrt{\kappa}}2-\frac12L_2(1-\cos (d(x,v)\sqrt{\kappa}))\\
& \quad  +  L_1\left(\sin^2 \frac{d(y,q)\sqrt{\kappa}}{2} + \sin \frac{d(y,q)\sqrt{\kappa}}{2}\right)\\
& \quad \text{ by \eqref{1-L1-ineq}}.
\end{align*}
Thus,
\bua
\sin^2 \frac{d(y,v)\sqrt{\kappa}}{2} & \leq &  \frac{L_2}{1-L_1}\left(
\sin^2\frac{d(x, y)\sqrt{\kappa}}2-\sin^2\frac{d(x,v)\sqrt{\kappa}}2\right)\\
&& +  \frac{L_1}{1-L_1}\left(\sin^2 \frac{d(y,q)\sqrt{\kappa}}{2} + \sin \frac{d(y,q)\sqrt{\kappa}}{2}\right).
\eua
By assumption, we have that $\ds \sin^2\frac{d(x, y)\sqrt{\kappa}}2 - \sin^2 \frac{d(x,v)\sqrt{\kappa}}{2} \le 0$.
Using the fact that $\ds \frac{L_2}{1-L_1}\geq 1$ and (\ref{l1-1-l1-leq}), it follows that
\bua
\sin^2 \frac{d(y,v)\sqrt{\kappa}}{2} 
& \leq & \sin^2\frac{d(x, y)\sqrt{\kappa}}2 - \sin^2 \frac{d(x,v)\sqrt{\kappa}}{2}\\
&& + \frac{1}{s\cos (M\sqrt{\kappa})}\left(\sin^2 \frac{d(y,q)\sqrt{\kappa}}{2} + 
\sin \frac{d(y,q)\sqrt{\kappa}}{2}\right).
\eua
}

\section{Effective rates of asymptotic regularity}\label{section-ef-as-reg}

We assume the hypothesis of Proposition \ref{effective-as-reg-divergent-sum}. 
As in \cite{Leu07a,KohLeu12,KohLeu12a}, the main tool in obtaining rates of asymptotic regularity is the following quantitative lemma, which is a slight reformulation of \cite[Lemma 1]{KohLeu12a}.

\blem\label{main-lema-as-reg-Halpern-divergent-sum}
Let $(\alpha_n)_{n\ge 1}$ be a sequence in $[0,1]$ and $(a_n)_{n\geq 1},(b_n)_{n\geq 1}$ be sequences in $\R_+$  such that
\beq
a_{n+1}\leq (1-\alpha_{n+1}) a_n + b_n \quad \text{for all~} n\in\Z_+.
\eeq
Assume that $\ds \sum_{n=1}^\infty b_n$  is convergent with Cauchy modulus  $\gamma$ and $\ds \sum_{n=1}^\infty \alpha_{n+1}$ diverges with rate of divergence $\theta$.

Then, $\ds\limn a_n=0$ with rate of convergence $\Sigma$ given by 
\beq
\Sigma(\eps,P,\gamma,\theta)=\theta\left(\gamma\left(\frac\eps 2\right)+
\max\left\{\left\lceil\ln\left(\frac{2P}\eps\right)\right\rceil,1\right\}\right)+1,
\eeq
where $P>0$ is an upper bound on  $(a_n)$.
\elem

A second useful result, which is also needed  in the  metastability proof, is the following:

\blem\label{mun-divergent-lambdan}
For all $n\geq 1$, let
\beq
\mu_n = 1- \frac{\sin\left((1-\lambda_n)M\sqrt{\kappa}\right)}{\sin(M\sqrt{\kappa})}\in (0,1). \label{def-mun}
\eeq
Then 
\be
\item\label{ineq-mun-lambda-n} $\mu_n \geq \lambda_n \cos \left(M \sqrt{\kappa}\right)$  for all $n\ge 1$. 
\item\label{sum-lambdan-mun}  $\ds\sum_{n=1}^\infty \lambda_{n+1}=\infty$  with rate of divergence $\theta$ yields  $\ds\sum_{n=1}^\infty \mu_{n+1}=\infty$  
with rate of divergence $\ds \tilde{\theta}(n) = \theta\left(\left\lceil\frac{1}{ \cos(M \sqrt{\kappa})}\right\rceil n \right)$.
\ee
\elem
\solution{
\be
\item 
One has 
\bua \mu_n 
&= & \frac{2\sin \frac{\lambda_n M \sqrt{\kappa}}2\cos 
\frac{(2-\lambda_n)M \sqrt{\kappa}}2}{\sin \left(M \sqrt{\kappa}\right)}
\ge  \frac{2\sin \frac{\lambda_n M \sqrt{\kappa}}{2}
\cos \left(M \sqrt{\kappa}\right)}{\sin \left(M \sqrt{\kappa}\right)} \\
& \ge &  \lambda_n \cos \left(M \sqrt{\kappa}\right).
\eua
\item Follows immediately from \eqref{ineq-mun-lambda-n}.
\ee
}

\blem
For all $n\in\Z_+$
\beq
d(x_n,x_{n+1}) \leq (1-\mu_{n+1})d(x_{n-1},x_n)+M|\lambda_{n + 1} - \lambda_n|.
\label{ineq-as-reg-C-bounded-M}
\eeq
\elem
\begin{proof}
Let us denote for simplicity $u_n = \lambda_{n+1} u + (1-\lambda_{n+1}) Tx_{n-1}$. Then,
\bua
d(x_n,u_n) &=& |\lambda_{n + 1} - \lambda_n|d(u,Tx_{n-1})\leq M|\lambda_{n + 1} - \lambda_n| \quad\text{and}\\
d(u_n,x_{n+1}) &\leq & \frac{\sin \big((1-\lambda_{n+1})M\sqrt{\kappa}\big)}{\sin \big(M\sqrt{\kappa}\big)}d(x_{n-1},x_n)\quad \text{by Lemma \ref{CATK-ABC-convex}}.
\eua
\end{proof}

\subsection{Proof of Proposition \ref{effective-as-reg-divergent-sum}}

Let $\tilde{\Phi}$, $\Phi$ be given by  \eqref{rate-xn-xn+1-sum-divergent} and \eqref{rate-as-reg-sum-divergent}. 
Apply Lemma \ref{main-lema-as-reg-Halpern-divergent-sum} with 
\[a_n = d(x_n,x_{n-1}),\quad b_n = M|\lambda_{n+1}-\lambda_n|\quad \text{and}  \quad \alpha_n=\mu_n, \]
and use Lemma \ref{mun-divergent-lambdan}.\eqref{sum-lambdan-mun} and the fact that $\ds \sum_{n=1}^\infty b_n$ is convergent with Cauchy modulus 
$\ds \tilde{\gamma}(\varepsilon)=\gamma\left(\frac{\varepsilon}{M}\right)$ 
to conclude that  $\limn d(x_n,x_{n+1})=0$ with rate of convergence $\tilde{\Phi}$. 

Since $d(x_n,Tx_n) \le  d(x_n,x_{n+1}) + M\lambda_{n+1}$ for all $n\geq 1$, it follows easily that $\Phi$ is a rate of asymptotic regularity. $\hfill\Box$

\section{A quantitative Browder theorem}

Let $X$ be a complete CAT$(\kappa)$ space, $C \subseteq X$  a bounded closed convex subset with diameter 
$\ds d_C < \frac{D_\kappa}{2}$ and $T:C\to C$ be nonexpansive. 

A very important step in the convergence proof for Halpern iterations is the  construction 
of a sequence of approximants converging strongly  to a fixed point of $T$. 
Given $t\in (0,1)$ and $u\in C$, Lemma \ref{CATK-ABC-convex} yields that the mapping
\beq
T_t^u:C\to C, \quad T_t^u(y)=tu+(1-t)Ty \label{averaged-u}
\eeq
is a contraction, hence it has a unique fixed point $z_t^u\in C$. Thus,
\beq
z_t^u=tu+(1-t)Tz_t^u.\label{Halpern-continuous}
\eeq

Pi\c{a}tek \cite{Pia11} obtained the following generalization  to 
CAT$(\kappa)$ spaces of an essential result due to Browder \cite{Bro66,Bro67}.

\bthm \cite{Pia11} \label{Piatek-theorem-Browder}
In the above hypothesis, $\ds\lim_{t\to 0^+}z_t^u$ exists and is a fixed point of $T$.
\ethm

In the setting of Hilbert spaces, Browder proved the result  using  weak sequential compactness and a projection argument (to the set of fixed points of $T$).  A new and elementary proof of Browder's result was given by Halpern \cite{Hal67} when $C$ is the closed unit ball and the starting 
point is $u=0$. Generalizations of Browder's theorem were obtained by 
Reich \cite{Rei80} for  uniformly smooth Banach spaces, Goebel and Reich  \cite{GoeRei84} 
for the Hilbert ball  and Kirk  \cite{Kir03} for CAT(0) spaces.

Kohlenbach \cite{Koh11} applied proof mining methods to both Browder's original proof and 
the extension of Halpern's proof to  bounded closed convex $C$ 
and arbitrary $u\in C$,  obtaining in this way quantitative versions of Browder's theorem 
with uniform effective rates of metastability. 
As pointed out in \cite[Remark 1.4]{Koh11}, one cannot expect in general to get effective 
rates of convergence. Since Kirk's proof of the generalization of Browder's 
theorem to CAT(0) spaces is obtained by a slight change of Halpern's argument, Kohlenbach's 
quantitative result
goes through basically unchanged to CAT(0) spaces (see \cite[Proposition 9.3]{KohLeu12a}).

In this section we obtain a quantitative version of Theorem \ref{Piatek-theorem-Browder}. 
As a consequence of Halpern's proof, for any nonincreasing sequence $(t_n)$ in 
$(0,1)$, one gets that $(z_{t_n}^u)$ converges strongly to some point $z\in C$, which is a 
fixed point of $T$ if $\limn t_n=0$.  Our quantitative result gives rates of 
metastability for such sequences $(z_{t_n}^u)$ and this suffices for the proof of our main 
Theorem \ref{main-theorem-metastability}.

\begin{proposition}\label{prop-meta-ztiu}
Let $X$ be a complete CAT$(\kappa)$ space, $C \subseteq X$  bounded closed convex with diameter 
$\ds d_C < \frac{D_\kappa}{2}$ and $T:C\to C$ be nonexpansive. 
Assume that $(t_n) \subseteq (0,1)$ is nonincreasing. Then for every $\varepsilon \in (0,1)$ and 
$g : \mathbb{N} \to \mathbb{N}$,
\[\exists K_0 \le K(\varepsilon, g, M) \ \forall i,j \in [K_0, K_0 + g(K_0)] \ \big(d(z_{t_i}^u, z_{t_j}^u) \le \frac{\varepsilon}{\sqrt{\kappa}}\big),\]
where  
\[\ds K(\varepsilon, g, M) = \widetilde g^{\ds \left(\left\lceil \frac{M\sqrt{\kappa}\tan(M\sqrt{\kappa})}{1 - 
\cos \varepsilon}\right\rceil\right)}(0),\] 
with $\ds d_C\leq M< \frac{D_\kappa}{2}$ and   $\widetilde g(n) = n + g(n)$.
\end{proposition}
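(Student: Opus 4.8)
The plan is to hang the whole argument on a single monotone, bounded real quantity attached to the approximants $(z^u_t)$ --- namely $t \mapsto d(z^u_t,u)$ --- and then run the standard ``a monotone bounded sequence has only finitely many $\varepsilon$-jumps'' argument, which automatically produces a metastability bound of the shape $\widetilde g^{(B)}(0)$. First, the family is well defined: for each $t \in (0,1)$ Lemma~\ref{CATK-ABC-convex} shows that the map $T^u_t$ of \eqref{averaged-u} is a contraction of the complete space $C$ with factor $\sin((1-t)M\sqrt\kappa)/\sin(M\sqrt\kappa) < 1$, so Banach's fixed point theorem gives the unique $z^u_t$ satisfying \eqref{Halpern-continuous}; since $d(u,Tz^u_t) \le M < D_\kappa/2$, every distance occurring below, multiplied by $\sqrt\kappa$, stays in $[0,\pi)$, so the spherical law of cosines and its corrections apply freely.

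The heart of the matter is the CAT$(\kappa)$ substitute for the Hilbert-space inequality $\langle u - z^u_s,\, z^u_t - z^u_s\rangle \le 0$ that underlies Browder's and Halpern's proofs, namely: for $t \le s$ in $(0,1)$,
\[
\cos\!\big(\sqrt\kappa\, d(u,z^u_t)\big)\ \le\ \cos\!\big(\sqrt\kappa\, d(u,z^u_s)\big)\,\cos\!\big(\sqrt\kappa\, d(z^u_s,z^u_t)\big).
\]
I would derive this from Lemma~\ref{lemma_meta_1} applied with $x = u$, $z = Tz^u_s$ (so $w := z^u_s$ lies on $[u,Tz^u_s]$ by \eqref{Halpern-continuous}) and $y := z^u_t$: its conclusion gives simultaneously $d(u,z^u_s) \le d(u,z^u_t)$ --- i.e.\ $t \mapsto d(z^u_t,u)$ is nonincreasing --- and the obtuse-angle statement $\angle_{\bar z^u_s}(\bar z^u_t,\bar u) \ge \pi/2$, which upon one use of the law of cosines together with the CAT$(\kappa)$ inequality $d(\bar z^u_s,\bar z^u_t) \ge d(z^u_s,z^u_t)$ and $\cos(\sqrt\kappa\, d(u,z^u_s)) \ge \cos(M\sqrt\kappa) > 0$ yields the displayed inequality. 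The only nontrivial point is discharging the comparison hypothesis of Lemma~\ref{lemma_meta_1}, namely $\cos(\sqrt\kappa\, d(z^u_t,Tz^u_s)) \ge \cos(\sqrt\kappa\, d(z^u_t,z^u_s))\cos(\sqrt\kappa\, d(z^u_s,Tz^u_s))$: in Hilbert space this is a two-line inner-product computation using nonexpansiveness, but here it must be extracted from the quasilinearization-type estimates of Propositions~\ref{CATk-technical-ineq-1}--\ref{CATk-technical-ineq-3}, using the fixed point equations for \emph{both} $z^u_s$ and $z^u_t$ together with the nonexpansiveness of $T$ (an arbitrary point in place of $z^u_t$ would not satisfy it, so both fixed point relations really are needed). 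This is the step I expect to be the genuine obstacle; everything after it is bookkeeping.

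Granting the displayed inequality, divide by $\cos(\sqrt\kappa\, d(u,z^u_s)) \ge \cos(M\sqrt\kappa)$ to get, for $t \le s$,
\[
1-\cos\!\big(\sqrt\kappa\, d(z^u_s,z^u_t)\big)\ \le\ \frac{1}{\cos(M\sqrt\kappa)}\Big(\cos\!\big(\sqrt\kappa\, d(u,z^u_s)\big) - \cos\!\big(\sqrt\kappa\, d(u,z^u_t)\big)\Big).
\]
Put $P_n := -\cos(\sqrt\kappa\, d(u,z^u_{t_n}))$. Since $(t_n)$ is nonincreasing and $t \mapsto d(z^u_t,u)$ is nonincreasing, $(P_n)$ is nondecreasing with values in $[-1,-\cos(M\sqrt\kappa)]$, so $\sup_n P_n - \inf_n P_n \le 1-\cos(M\sqrt\kappa)$; and for $i \le j$ (hence $t_i \ge t_j$) the estimate reads $1-\cos(\sqrt\kappa\, d(z^u_{t_i},z^u_{t_j})) \le \frac{1}{\cos(M\sqrt\kappa)}(P_j - P_i)$. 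Thus if $d(z^u_{t_i},z^u_{t_j}) > \varepsilon/\sqrt\kappa$ then $P_j - P_i > (1-\cos\varepsilon)\cos(M\sqrt\kappa) =: \delta$. Now set $B = \big\lceil M\sqrt\kappa\tan(M\sqrt\kappa)/(1-\cos\varepsilon)\big\rceil$, $\widetilde g(n) = n+g(n)$, $K^{(0)} = 0$, $K^{(l+1)} = \widetilde g(K^{(l)})$, so $K(\varepsilon,g,M) = K^{(B)}$. If none of $K^{(0)},\dots,K^{(B-1)}$ had the asserted property, then for each $l < B$ there would be indices (WLOG) $i_l < j_l$ in $[K^{(l)},K^{(l+1)}]$ with $d(z^u_{t_{i_l}},z^u_{t_{j_l}}) > \varepsilon/\sqrt\kappa$, hence $P_{j_l} - P_{i_l} > \delta$; since $i_{l+1} \ge K^{(l+1)} \ge j_l$ and $(P_n)$ is nondecreasing, these telescope to $P_{j_{B-1}} - P_{i_0} > B\delta \ge M\sqrt\kappa\sin(M\sqrt\kappa) \ge 1-\cos(M\sqrt\kappa)$ --- using $x\sin x \ge 1-\cos x$ on $[0,\pi/2]$ --- contradicting $P_{j_{B-1}} - P_{i_0} \le 1-\cos(M\sqrt\kappa)$. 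Hence some $K_0 \in \{K^{(0)},\dots,K^{(B-1)}\}$ has the required property, and $K_0 \le K^{(B-1)} \le K^{(B)} = K(\varepsilon,g,M)$, as claimed. (In particular this shows $(z^u_{t_n})$ is Cauchy, recovering the qualitative conclusion.)
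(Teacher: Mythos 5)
You have correctly identified the structure of the argument (monotone bounded scalar quantity $t\mapsto d(u,z^u_t)$, obtuse angle at $z^u_s$, law of cosines, then the $\widetilde g^{(B)}(0)$ bound from the finitely-many-$\varepsilon$-jumps lemma), and the bookkeeping at the end is fine. But there is a genuine gap exactly where you flag one: with your choice $x=u$, $y=z^u_t$, $z=Tz^u_s$, $w=z^u_s$, the comparison hypothesis of Lemma~\ref{lemma_meta_1},
\[
\cos\!\big(\sqrt\kappa\, d(z^u_t,Tz^u_s)\big)\ \ge\ \cos\!\big(\sqrt\kappa\, d(z^u_t,z^u_s)\big)\cos\!\big(\sqrt\kappa\, d(z^u_s,Tz^u_s)\big),
\]
has no visible route via Propositions~\ref{CATk-technical-ineq-1}--\ref{CATk-technical-ineq-3}: those control points on two geodesics issuing from a \emph{common} vertex $x$, whereas here $Tz^u_s$ and $z^u_t$ sit on the ends of two different geodesic configurations, and nonexpansiveness alone does not feed in.

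The paper's proof sidesteps this entirely by a different choice of the third vertex. Fix $t_j\le t_i$ (your $t\le s$). Instead of $z=Tz^u_{t_i}$, set $u_{i,j}=t_j u+(1-t_j)Tz^u_{t_i}$, a point on the same geodesic $[u,Tz^u_{t_i}]$ but \emph{further from $u$} than $z^u_{t_i}$ (because $t_j\le t_i$), so $w=z^u_{t_i}$ lies on $[u,u_{i,j}]$. Then $z^u_{t_j}=t_ju+(1-t_j)Tz^u_{t_j}$ and $u_{i,j}=t_ju+(1-t_j)Tz^u_{t_i}$ are the same convex combination of $Tz^u_{t_j}$, $Tz^u_{t_i}$ with $u$, so Lemma~\ref{CATK-ABC-convex} plus nonexpansiveness give $d(z^u_{t_j},u_{i,j})\le d(Tz^u_{t_j},Tz^u_{t_i})\le d(z^u_{t_j},z^u_{t_i})$. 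Because all the distances occurring are $\le M\sqrt\kappa<\pi/2$, this single inequality already discharges the hypothesis of Lemma~\ref{lemma_meta_1} (with $y=z^u_{t_j}$, $z=u_{i,j}$, $w=z^u_{t_i}$): since $\cos$ is decreasing on $[0,\pi/2]$ and $\cos(\sqrt\kappa\,d(z^u_{t_i},u_{i,j}))\le 1$, one has $\cos(\sqrt\kappa\,d(z^u_{t_j},u_{i,j}))\ge\cos(\sqrt\kappa\,d(z^u_{t_j},z^u_{t_i}))\ge\cos(\sqrt\kappa\,d(z^u_{t_j},z^u_{t_i}))\cos(\sqrt\kappa\,d(z^u_{t_i},u_{i,j}))$. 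With the hypothesis verified, Lemma~\ref{lemma_meta_1} gives both the monotonicity $d(u,z^u_{t_i})\le d(u,z^u_{t_j})$ and the obtuse angle at $\bar z^u_{t_i}$, and the rest of your computation (and the paper's, which invokes \cite[Lemma 4.1]{Koh11} rather than rederiving the jump-counting bound, but with the same constant $B$) goes through verbatim. So: your framework is right, but you need to replace $Tz^u_s$ by $t_tu+(1-t_t)Tz^u_s$ as the third vertex to make the key hypothesis checkable.
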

\solution{ 
Let $\varepsilon \in (0,1)$ and $g:\N\to\N$. We assume without loss of generality that $i < j$, hence $t_j \le t_i$. 
Denote $\ds u_{i,j} = t_j u + (1 - t_j) T z_{t_i}^u$.
Then,
\[d(u, z_{t_i}^u) = (1 - t_i)d(u, T z_{t_i}^u) \le (1 - t_j)d(u, T z_{t_i}^u) = d(u, u_{i,j}), \]
so $z_{t_i}^u \in [u, u_{i,j}]$. It follows by  Lemma \ref{CATK-ABC-convex} that
$d(z_{t_j}^u, u_{i,j}) \le d(T z_{t_j}^u, T z_{t_i}^u) \le d(z_{t_j}^u, z_{t_i}^u)$. 

We can apply now Lemma \ref{lemma_meta_1} with $x=u, y=z_{t_j}^u, z=u_{i,j}, w=z_{t_i}^u$  
to get that $d(u, z_{t_i}^u) \le d(u, z_{t_j}^u)$ and for 
$\Delta(\bar u, \bar z_{t_j}^u, \bar u_{i,j})$ a $\kappa$-comparison triangle of 
$\Delta(u, z_{t_j}^u, u_{i,j})$, one has 
$\angle_{\bar z_{t_i}^u}(\bar u, \bar z_{t_j}^u) \ge \pi/2$. Since $(d(u, z_{t_n}^u))_n$  
is a nondecreasing sequence in $[0,M]$, by an application of  \cite[Lemma 4.1]{Koh11}, there exists $K_0 \le K(\varepsilon, g, M)$ such that
\bua
\forall i,j \in [K_0, K_0 + g(K_0)] \left(|d(u,z_{t_j}^u) - d(u, z_{t_i}^u)| 
\le \frac{1 - \cos\varepsilon}{\sqrt{\kappa}\tan (M\sqrt{\kappa})}\right).
\eua

Let now $i<j \in [K_0, K_0 + g(K_0)]$. 
Then, 
\bua
\cos (d(u,z_{t_i}^u)\sqrt{\kappa}) - \cos (d(u, z_{t_j}^u)\sqrt{\kappa})
&\le & \sin(M\sqrt{\kappa}) \frac{1 - \cos\varepsilon}{\tan (M\sqrt{\kappa})} \\
&= & (1 - \cos\varepsilon)\cos(M\sqrt{\kappa}).
\eua
Furthermore, by the cosine law and the fact that $\angle_{\bar z_{t_i}^u}(\bar u, \bar z_{t_j}^u)\geq \frac{\pi}2$, we have that
\bua
\cos (d(\bar u, \bar z_{t_j}^u)\sqrt{\kappa})
&\leq & \cos (d(\bar u, \bar z_{t_i}^u)\sqrt{\kappa})\cos (d(\bar z_{t_i}^u, \bar z_{t_j}^u)\sqrt{\kappa}).
\eua
It follows that 
\begin{align*}
&\cos (d(u, z_{t_i}^u)\sqrt{\kappa}) - (1 - \cos \varepsilon)\cos (M\sqrt{\kappa})\\
& \quad \le \cos (d(u, z_{t_j}^u)\sqrt{\kappa}) = \cos (d(\bar u, \bar z_{t_j}^u)\sqrt{\kappa})\\ 
& \quad \le \cos (d(\bar u, \bar z_{t_i}^u)\sqrt{\kappa})\cos (d(\bar z_{t_j}^u, \bar z_{t_i}^u)\sqrt{\kappa})\\ 
& \quad \le \cos (d(u, z_{t_i}^u)\sqrt{\kappa})\cos (d(z_{t_j}^u, z_{t_i}^u)\sqrt{\kappa}).
\end{align*}
Hence
\begin{align*}
\cos (d(z_{t_j}^u, z_{t_i}^u)\sqrt{\kappa}) \ge 
1 - (1 - \cos\varepsilon)\frac{\cos (M\sqrt{\kappa})}{\cos (d(u, z_{t_i}^u)\sqrt{\kappa})}
\ge \cos\varepsilon.
\end{align*}
Thus, $d(z_{t_j}^u, z_{t_i}^u)\sqrt{\kappa}\leq \eps$
and the proof is complete.
}

\section{Effective rates of metastability}\label{section-effective-rates-meta}

In this section we shall prove  the main result of our paper, Theorem \ref{main-theorem-metastability}, 
hence we assume that its hypotheses are satisfied. We give first some technical results that will be needed in the proof.

\subsection{Some useful lemmas}

As in  \cite{KohLeu12,KohLeu12a}, one of the main ingredients of our proof
is a sequence obtained by combining the Halpern iteration 
$(x_n)$  and the points $\ds z_t^u$. However, in the setting of CAT$(\kappa)$ spaces, 
its definition and the proofs of the necessary properties are based on the much more involved
technical lemmas from Section \ref{section-CATk-technical-lemmas}.

If $(a_n)$ is a real sequence, we say that $\ds \lsupn a_n\le 0$ with effective rate $\Psi:(0,\infty)\to\Z_+$ if
\[
\forall \eps>0\,\forall n\ge \Psi(\eps)\,\, (a_n\le \eps).
\]

Let us define
\beq
\gamma_n^t = \sin^2 \frac{d(u,z_t^u)\sqrt{\kappa}}{2} - \sin^2\frac{d(u,x_{n+1})\sqrt{\kappa}}{2}.
\label{def-gamma-nt}
\eeq

\bprop\label{prop-gammant-limsup-0}
\be
\item\label{item-ineq-gammant-ant-sin} For $n \ge 1$, if $\gamma_n^t \ge 0$, then
\beq
\gamma_n^t \leq \frac{a_n}t - \sin ^2 \frac{d(x_{n+1}, z_t^u)\sqrt{\kappa}}{2},
\eeq
where 
\beq
\!\!\!\!\!\!\!\!\!\!\!\!\!\!  a_n = \frac{1}{\cos (M\sqrt{\kappa})}\left(\sin^2 \frac{d(x_{n+1},Tx_{n+1})\sqrt{\kappa}}{2} + 
\sin \frac{d(x_{n+1},Tx_{n+1})\sqrt{\kappa}}{2}\right). 
\eeq 
\item \label{item-ineq-gammant-ant} $\ds \gamma_n^t \leq \frac{a_n}t$ for all $n\ge 1$.
\item\label{item-limsup-gamma-nt-effective}  $\ds \lsupn \gamma_n^t \leq 0 $ with effective 
rate $\Psi(\eps,\kappa,M,t,\gamma,\theta,\alpha)$ given  by 
\beq
\!\!\!\!\!\!\!\!\!\!\!\!\!\! \Psi=\max\left\{\theta\left(\left\lceil\frac1{\cos(M \sqrt{\kappa})}\right\rceil\left(\gamma(L)+
\max\left\{\left\lceil\ln\left(\frac1L\right)\right\rceil,1\right\}\right)\right),\alpha(2L)\right\}, 
\label{lsup-gamma-nt-effective}
\eeq
where $\ds L=\frac{\cos(M\sqrt{\kappa})t\eps}{4M\sqrt{\kappa}}$.
\item \label{item-ineq-main} For $n \ge 1$,
\begin{align*}
\sin ^2 \frac{d(x_{n+1},z_t^u)\sqrt{\kappa}}{2} &\le 
\frac{\sin \left((1-\lambda_{n+1})M\sqrt{\kappa}\right)}{\sin (M\sqrt{\kappa})}\sin ^2 \frac{d(x_n,z_t^u)\sqrt{\kappa}}{2}\\  
& \quad + \frac{\sin \left(\lambda_{n+1}M\sqrt{\kappa}\right)}{\sin (M\sqrt{\kappa})}\max\{\gamma_n^t, 0\}\\
& \quad + \frac{\sin \left(t M \sqrt{\kappa}\right)}{\sin (M\sqrt{\kappa})}\sin^2 \frac{M\sqrt{\kappa}}{2}.
\end{align*}
\ee
\eprop
\solution{
\be
\item Apply Proposition \ref{CATk-technical-ineq-3}.\eqref{ineq-used-gammant} with 
$x=u, y=x_{n+1}, z=Tz_t^u,v=z_t^u, s=t$, $q=Tx_{n+1}$ and note that 
\[
\sin^2\frac{d(u,x_{n+1})\sqrt{\kappa}}2 - \sin^2\frac{d(u,z_t^u)\sqrt{\kappa}}{2}= - \gamma_n^t\le 0.
\]
It follows that $\ds\sin ^2 \frac{d(x_{n+1}, z_t^u)\sqrt{\kappa}}{2} \leq   - \gamma_n^t+\frac{a_n}t$, 
hence \eqref{item-ineq-gammant-ant-sin}.
\item Obviously, since $\ds \frac{a_n}t\geq 0$.
\item Since $\ds a_n\leq  \frac{1}{\cos (M\sqrt{\kappa})}d(x_{n+1}, Tx_{n+1})\sqrt{\kappa}$ and, 
by Proposition \ref{effective-as-reg-divergent-sum}, the sequence
 $(d(x_n,Tx_n))$ converges to $0$ with rate of convergence $\Phi$ given by \eqref{rate-as-reg-sum-divergent}, 
 we get that $\lsupn \gamma_n^t \leq 0 $ with effective rate
\[
\Psi(\eps,\kappa,M,t,\gamma,\theta,\alpha)=\Phi\left(\frac{\cos(M\sqrt{\kappa})t\eps}{\sqrt{\kappa}}, 
\kappa,M,\gamma,\theta,\alpha\right).
\]
\item By Proposition \ref{CATk-technical-ineq-2}.\eqref{CATk-technical-ineq-2-v2} with 
$x=u, y=Tx_n, z=Tz_t^u, w=x_{n+1}, v= z_t^u, r=\lambda_{n+1}$ and $s=t$.
\ee
}

In fact, it suffices for the proof of  the main theorem to consider the case 
$\ds t_i=\frac{1}{i+1}, i\geq 0$. Then $(t_i)$ converges towards $0$ with rate  
$\ds \left\lceil\frac{1}{\eps}\right\rceil.$

We shall denote $\gamma_n^{t_i}$ with $\gamma_n^i$. Furthermore, $z_{t_i}^u$ will be 
simply denoted  by $z_i^u$. Thus, 
\beq
\gamma_n^i = \sin^2 \frac{d(u,z_i^u)\sqrt{\kappa}}{2} - \sin^2\frac{d(u,x_{n+1})\sqrt{\kappa}}{2}.
\eeq

\blem \label{lemma-Bwd-2}
Assume that $i,j\geq 0$ and $\delta \in (0,1)$  are such that  $d(u,T z_i^u) - d(u, T z_j^u) \leq  
\frac{\delta}{\sqrt{\kappa}}$. Then,
\bua
\gamma_n^i &\le&  \gamma_n^j + \sin^2 \frac{M\sqrt{\kappa}}{2(j+1)} + 2\sin \frac{M\sqrt{\kappa}}{2(j+1)} + 
\sin^2 \frac{\delta}{2} + 2\sin \frac{\delta}{2} \sin \frac{M\sqrt{\kappa}}{2}.
\eua
\elem
\solution{
We have that 
\bua
\gamma_n^i &=& \sin ^2 \frac{\frac{i}{i+1}d(u,T z_i^u)\sqrt{\kappa}}{2} - \sin^2\frac{d(u,x_{n+1})\sqrt{\kappa}}{2}\\
 &\le& \sin ^2 \frac{d(u,T z_i^u)\sqrt{\kappa}}{2} - \sin^2\frac{d(u,x_{n+1})\sqrt{\kappa}}{2}\\
& \le & \left(\sin\frac{d(u, T z_j^u)\sqrt{\kappa}}{2}+\sin\frac{\delta}{2} \right)^2 - \sin^2\frac{d(u,x_{n+1})\sqrt{\kappa}}{2}\\
& \le &\sin^2 \frac{d(u, T z_j^u)\sqrt{\kappa}}{2} - \sin^2\frac{d(u,x_{n+1})\sqrt{\kappa}}{2} + \sin^2 \frac{\delta}{2} + 2\sin \frac{\delta}{2} \sin \frac{M\sqrt{\kappa}}{2}.
\eua
Note that
\bua
\sin^2 \frac{d(u, T z_j^u)\sqrt{\kappa}}{2}  & =& \sin^2 \frac{\frac{j}{j+1}d(u, T z_j^u)\sqrt{\kappa} + \frac{1}{j+1}d(u, T z_j^u)\sqrt{\kappa}}{2}\\
&\le& \sin^2 \frac{\frac{j}{j+1}d(u, T z_j^u)\sqrt{\kappa}}{2} + \sin^2 \frac{M\sqrt{\kappa}}{2(j+1)} + 2\sin \frac{M\sqrt{\kappa}}{2(j+1)}. 
\eua

}

Finally, let us recall the following slight reformulation of \cite[Lemma 5.2]{KohLeu12a}.

\blem\label{quant-Aoyama-all-meta-rate-0}
Let $\eps \in (0,2)$, 
$g:\N\to\N$, $L>0$, $\theta:\Z_+\to\Z_+$ and $\psi:(0,\infty)\to\Z_+$. Define
\bua
\Theta=\Theta(\eps, L,\theta,\psi) & = & \theta\left(\psi\left(\frac{\eps}3\right)-1+\max\left\{\left\lceil\ln\left(\frac{3L}{\eps}\right)
\right\rceil,1\right\}\right)+1, \\
\Delta=\Delta(\eps,g, L,\theta,\psi) &= &  \frac{\eps}{3g_\eps(\Theta-\psi(\eps/3))},
\eua
where  $g_\eps(n)=n+g(n+\psi(\eps/3))$.\\
Assume that 
\be
\item $(\alpha_n)$ is a sequence in $[0,1]$ such that the series $\ds \sum_{n=1}^\infty \alpha_n$ diverges with  rate of divergence $\theta$;
\item $(t_n)$ is a sequence of real numbers such that $\ds t_n\le \frac{\eps}3$ for all $\ds n\ge \psi\left(\frac{\eps}3\right)$.
\ee
Let  $(s_n)$ be a bounded sequence of real numbers with upper bound $L$ satisfying
\beq
s_{n+1}\le (1-\alpha_n)s_n+\alpha_nt_n+\Delta \quad\quad\text{for all }n\ge 1.\label{sn-recurrence-0}
\eeq
Then $s_n\le \eps\,$ for all $n\in[\Theta,\Theta+g(\Theta)]$.
\elem

\subsection{Proof of Theorem \ref{main-theorem-metastability}}

Let $\eps\in(0,2)$ and $g:\N\to\N$ be fixed. For simplicity, we omit parameters $\kappa,M, \Phi, \theta,\alpha,\beta$ for all functionals in this proof. 
Let us define $h : (0,1) \to \mathbb{R}_+$ by
\beq
h(\delta)=\sin \frac{\delta}{2}\left(\sin\frac{\delta}{2} + 2 \sin \frac{M\sqrt{\kappa}}{2}\right) + \sin \frac{\delta M\sqrt{\kappa}}{2}
\left(\sin \frac{\delta M\sqrt{\kappa}}{2} + 2\right) \le 6 \delta.
\eeq
Take $\ds \eps_0 = \frac{\cos(M\sqrt{\kappa})}{36}\sin^2 \frac{\varepsilon\sqrt{\kappa}}{4}$.
Then, $\ds h(\eps_0) \le \frac{\cos(M\sqrt{\kappa})}{6}\sin^2 \frac{\varepsilon\sqrt{\kappa}}{4}$.

Applying Proposition \ref{prop-meta-ztiu} for $\ds t_i=\frac{1}{i+1}, \varepsilon_0$ and $f^*$, we get the existence of
 $$\ds K_1 \le K(\varepsilon_0, f^*)= \widetilde {f^*}^{\ds \left(\left\lceil \frac{M\sqrt{\kappa}\tan(M\sqrt{\kappa})}{1 - 
\cos\varepsilon_0}\right\rceil\right)}(0)$$ 
such that $\ds d(z_i^u,z_j^u)\le \frac{\varepsilon_0}{\sqrt{\kappa}}$ for  all $i,j \in [K_1, \widetilde{f^*}(K_1)]$.

Let $\ds K_0= K_1 + \left\lceil\frac{1}{\varepsilon_0}\right\rceil$ and $\ds J=K_0 + f(K_0)=\widetilde{f^*}(K_1)$.
It follows that $\ds d(z_J^u,z_{K_0}^u)\leq \frac{\varepsilon_0}{\sqrt{\kappa}}$, hence
\bua
d(u,T z_J^u) & \le & d(u, T z_{K_0}^u) + d(T z_{K_0}^u, T z_J^u) \le  d(u, T z_{K_0}^u) + d(z_{K_0}^u, z_J^u)\\
&\le& d(u, T z_{K_0}^u) + \frac{\varepsilon_0}{\sqrt{\kappa}}.
\eua

An application of Lemma \ref{lemma-Bwd-2} with $i=J, \,j=K_0$ and $\delta=\eps_0$ gives us
\bua
\gamma_n^J & \leq &  \gamma_n^{K_0}+  \sin^2 \frac{M\sqrt{\kappa}}{2(K_0+1)} + 2\sin \frac{M\sqrt{\kappa}}{2(K_0+1)} + \sin^2 \frac{\eps_0}{2} + 2\sin \frac{\eps_0}{2} \sin \frac{M\sqrt{\kappa}}{2}\\
&\le& \gamma_n^{K_0} + \sin^2 \frac{\varepsilon_0}{2} + 2\sin \frac{\varepsilon_0}{2} \sin \frac{M\sqrt{\kappa}}{2} + \sin^2 \frac{M\varepsilon_0\sqrt{\kappa}}{2} + 2\sin \frac{M\varepsilon_0\sqrt{\kappa}}{2}\\
& = &  \gamma_n^{K_0} +h(\eps_0)\leq \gamma_n^{K_0} + \frac{\cos (M\sqrt{\kappa})}{6}\sin^2 \frac{\varepsilon\sqrt{\kappa}}{4}.
\eua

Applying now Proposition \ref{prop-gammant-limsup-0}.\eqref{item-ineq-main} with $t=\frac{1}{J+1}$ and recalling the definition \eqref{def-mun} of $(\mu_n)$, 
it follows that for all $n\geq 1$,
\bua
\sin^2 \frac{d(x_{n+1}, z_J^u)\sqrt{\kappa}}{2} & \le & (1-\mu_{n+1})\sin^2 \frac{d(x_n,z_J^u)\sqrt{\kappa}}{2}\\
&& + \frac{\sin(\lambda_{n+1}M\sqrt{\kappa})}{\sin (M\sqrt{\kappa})}\max\{\gamma_n^J,0\}\\
&& + \frac{\sin\left(\frac{1}{J+1}M\sqrt{\kappa}\right)}{\sin (M\sqrt{\kappa})}\sin^2\frac{M\sqrt{\kappa}}{2}.
\eua

Since $\ds J=K_0+f(K_0)\ge \left\lceil\frac{M\sqrt{\kappa}}{\Delta^*_{K_0}(\sin^2\frac{\varepsilon\sqrt{\kappa}}{4},g)}\right\rceil$ and
\[\cos(M\sqrt{\kappa})\sin(\lambda_{n+1}M\sqrt{\kappa}) \le \sin (M\sqrt{\kappa}) - 
\sin\left((1 - \lambda_{n+1})M\sqrt{\kappa}\right),\]
it follows that 
\bua \sin^2 \frac{d(x_{n+1}, z_J^u)\sqrt{\kappa}}{2} & \le & (1 - \mu_{n+1})\sin^2 \frac{d(x_n,z_J^u)\sqrt{\kappa}}{2}\\
&& + \mu_{n+1} \max\left\{\frac{\gamma_n^J}{\cos (M\sqrt{\kappa})},0\right\} + \Delta^*_{K_0}\left(\sin^2\frac{\varepsilon\sqrt{\kappa}}{4},g\right).
\eua

Letting $\ds t=\frac1{K_0+1}$ in Proposition \ref{prop-gammant-limsup-0}.\eqref{item-limsup-gamma-nt-effective}, we get that 
\[\gamma_n^{K_0}\leq \frac{\cos(M \sqrt{\kappa})}{6}\sin^2\frac{\varepsilon\sqrt{\kappa}}{4},\] 
for all $\ds n \ge \chi^*_{K_0}\left(\frac{1}{3}\sin^2\frac{\varepsilon\sqrt{\kappa}}{4}\right)=
\chi_{K_0}\left(\frac{\cos(M \sqrt{\kappa})}{6}\sin^2\frac{\varepsilon\sqrt{\kappa}}{4}\right)$. 
Thus, 
\bua \gamma_n^{J} & \le & \gamma_n^{K_0} + \frac{\cos (M\sqrt{\kappa})}{6}\sin^2 \frac{\varepsilon\sqrt{\kappa}}{4}\leq 
\frac{\cos(M \sqrt{\kappa})}{3}\sin^2\frac{\varepsilon\sqrt{\kappa}}{4},
\eua
and so,  $\ds \max\left\{\frac{\gamma_n^J}{\cos (M\sqrt{\kappa})},0\right\} \le \frac{1}{3}\sin^2\frac{\varepsilon\sqrt{\kappa}}{4}$ for 
all $\ds n \ge \chi^*_{K_0}\left(\frac{1}{3}\sin^2\frac{\varepsilon\sqrt{\kappa}}{4}\right)$.

Furthermore, by Lemma \ref{mun-divergent-lambdan}, we have that $\ds\sum_{n=1}^\infty \mu_{n+1}=\infty$ with rate of divergence 
$\ds \tilde{\theta}(n) = \theta\left(\left\lceil\frac{1}{ \cos(M \sqrt{\kappa})}\right\rceil n\right)$. Hence, we can apply  Lemma \ref{quant-Aoyama-all-meta-rate-0} with 
\bua 
s_n=\sin^2 \frac{d(x_n, z_J^u)\sqrt{\kappa}}{2}, & \ds t_n= \max\left\{\frac{\gamma_n^J}{\cos (M\sqrt{\kappa})},0\right\}, &
\alpha_n=\mu_{n+1}, \\
\eps= \sin^2\frac{\varepsilon\sqrt{\kappa}}{4}, & \ds \Delta=\Delta^*_{K_0}\left(\sin^2\frac{\varepsilon\sqrt{\kappa}}{4},g\right), & \ds L= \sin^2 \frac{M\sqrt{\kappa}}{2}.
\eua
By letting  $\ds N =\Theta_{K_0}\left(\sin^2\frac{\varepsilon\sqrt{\kappa}}{4}\right)$, it follows that for all  $n \in [N, N+g(N)]$,
\[\sin^2 \frac{d(x_n, z_J^u)\sqrt{\kappa}}{2} \le \sin^2 \frac{\varepsilon\sqrt{\kappa}}{4}, \quad \mbox{and so} \quad d(x_n,z_J^u) \le \frac{\varepsilon}{2}.\]
Obviously, $d(x_n,x_m) \le \varepsilon$ for all $m,n\in [N, N+g(N)]$.  One can easily see that $\ds N\leq \Sigma(\varepsilon,g)$.
\hfill $\Box$

\mbox{ } 

\noindent
{\bf Acknowledgements:} \\[1mm] 
Lauren\c tiu Leu\c stean was supported by a grant of the Romanian 
National Authority for Scientific Research, CNCS - UEFISCDI, project 
number PN-II-ID-PCE-2011-3-0383. \\[1mm]
Adriana Nicolae was supported by a grant of the Romanian
Ministry of Education, CNCS - UEFISCDI, project number PN-II-RU-PD-2012-3-0152.

\end{document}